\documentclass[11pt]{amsart}
\usepackage{amssymb}
\usepackage{amsthm}
\usepackage{amscd}
\usepackage{hyperref}

\newtheorem{theorem}{Theorem}[section]
\newtheorem{lemma}[theorem]{Lemma}

\newtheorem{question}[theorem]{Question}
\newtheorem{conjecture}[theorem]{Conjecture}

\theoremstyle{definition}
\newtheorem{definition}[theorem]{Definition}


\makeatletter

\def\dotminussym#1#2{%
  \setbox0=\hbox{$\m@th#1-$}%
  \kern.5\wd0%
  \hbox to 0pt{\hss\hbox{$\m@th#1-$}\hss}%
  \raise.6\ht0\hbox to 0pt{\hss$\m@th#1.$\hss}%
  \kern.5\wd0}

\mathchardef\mhyphen="2D


\allowdisplaybreaks[2]

\newcommand{\RCA}{\ensuremath{\mathbf{RCA_0}}}

\newcommand{\ACA}{\ensuremath{\mathbf{ACA_0}}}
\newcommand{\ATR}{\ensuremath{\mathbf{ATR_0}}}
\newcommand{\Pioo}{\ensuremath{\mathbf{\Pi^1_1\mhyphen CA_0}}}

\newcommand{\RCAU}{\ensuremath{\mathbf{RCA_0+\exists\mathfrak{U}}}}
\newcommand{\ACAU}{\ensuremath{\mathbf{ACA_0+\exists\mathfrak{U}}}}
\newcommand{\ATRU}{\ensuremath{\mathbf{ATR_0+\exists\mathfrak{U}}}}
\newcommand{\PiooU}{\ensuremath{\mathbf{\Pi^1_1\mhyphen CA_0+\exists\mathfrak{U}}}}

\begin{document}

\title{Ultrafilters in Reverse Mathematics}
\author{Henry Towsner}
\date{\today}
\thanks{Partially supported by NSF grant DMS-1157580.}
\address {Department of Mathematics, University of Connecticut U-3009, 196 Auditorium Road, Storrs, CT 06269-3009, USA}
\email{henry.towsner@uconn.edu}
\urladdr{www.math.uconn.edu/~towsner}

\begin{abstract}
We extend theories of reverse mathematics by a non-principal ultrafilter, and show that these are conservative extensions of the usual theories \ACA{}, \ATR{}, and \Pioo{}.
\end{abstract}

\maketitle

\section{Introduction}
A recurring difficulty in reverse mathematics is adapting a proof which involves higher-order notions to take place in the purely second-order context of the major theories of reverse mathematics.  One of the most common such notions is that of a non-principal ultrafilter on $\mathbb{N}$.  Recall that a non-principal ultrafilter is a set $\mathfrak{U}\subseteq\mathcal{P}(\mathbb{N})$ such that:
\begin{itemize}
  \item If $S\in \mathfrak{U}$ and $T\in \mathfrak{U}$ then $S\cap T\in \mathfrak{U}$,
  \item If $S\in\mathfrak{U}$ and $S\subseteq T$ then $T\in\mathfrak{U}$,
  \item Every element of $\mathfrak{U}$ is infinite,
  \item For every $S\subseteq\mathbb{N}$, either $S\in\mathfrak{U}$ or $\mathbb{N}\setminus S\in\mathfrak{U}$.
\end{itemize}
Not only is the statement itself intrinsically third-order, but there are no ``natural'' examples of such objects---their existence cannot be proved even in pure ZF, let alone in the much weaker theories of reverse mathematics.

Ultrafilters have turned out to be useful tools in combinatorics and dynamical systems (see \cite{Hindman2005} for many examples), and there have been several successful translations of proofs that use ultrafilters into proofs that can be carried out in second-order arithmetic \cite{avigad98,hirst04,towsner:MR2791353,towsner:MR2795548}.  These translations all depend on the same idea: in a proof of a second-order statement, a full ultrafilter can be replaced by a filter in which the fourth condition above holds not for all sets $S$, but only for a sufficiently large countable collection of particular sets.  Since countable collections of sets can be coded by a single set, this ``approximate ultrafilter'' can be described, and even constructed explicitly, in second-order arithmetic.  (These approximate ultrafilters are quite natural objects in their own right---they represent closed sets in the Stone-\v {C}ech compactification of $\mathbb{N}$.)

In this paper we apply this idea systematically: we consider an extension of second-order arithmetic by a third-order predicate and axioms stating that this predicate names an ultrafilter, and show that this is actually a conservative extension for the three stronger theories of reverse mathematics, \ACA{}, \ATR{}, and \Pioo{}.

\section{Theories of Reverse Mathematics with Ultraproducts}
The language $\mathcal{L}^2$ is the usual language of second-order arithmetic, containing two sorts, one for natural numbers (usually denoted with lowercase letters) and one for sets (usually denoted with uppercase letters).  (\cite{simpson99} is the standard reference for the theories of second-order arithmetic we will be considering.)  It will be convenient to assume that $\mathcal{L}^2$ includes, for each second-order term $T$ and each first-order term $t$, a second-order term $T_t$, and that all theories include an axiom specifying that $s\in T_t\leftrightarrow (t,s)\in T$ (where $(s,t)$ abbreviates some quantifier-free formula for pairing).

Since we cannot discuss ultrafilters directly in second-order arithmetic, we will extend the language by adding a unary predicate on second-order terms which will be intended to denote a non-principal ultrafilter.
\begin{definition}
We define the language $\mathcal{L}^{\mathfrak{U}}$ to be the language $\mathcal{L}^{2}$ together with a unary predicate $\mathfrak{U}$ on second-order terms.
\end{definition}

The main axioms defining the properties of $\mathfrak{U}$ are given by:
\begin{definition}
We define the collection of axioms $\exists\mathfrak{U}$ to consist of the axioms:
\begin{enumerate}
  \item $\forall^2 X(X\in\mathfrak{U}\rightarrow \forall x\exists y>x y\in X)$,
  \item $\forall^2 X,y(X\in\mathfrak{U}\wedge Y\in\mathfrak{U}\rightarrow X\cap Y\in\mathfrak{U})$,
  \item $\forall^2 X,y(X\in\mathfrak{U}\wedge X\subseteq Y\rightarrow Y\in\mathfrak{U})$, and
  \item $\forall^2 X(X\in\mathfrak{U}\vee X^c\in\mathfrak{U})$.
\end{enumerate}
\end{definition}
We need the operation $X_t$, which is usually treated as a defined operation, to get around a technical syntactic limitation.  When reasoning informally in \ACA{}, it is normal to take two arithmetic formulas $\phi(x,X)$ and $\psi(x,y)$ and form the set
\[\{n\mid \phi(n,\{m\mid \psi(n,m)\})\}.\]
Strictly speaking, however, this is not valid in the language $\mathcal{L}^2$: we have to replace the $X$ in $\phi$ with the formula $\psi$.  In the presence of $\mathfrak{U}$, this is no longer possible, since in the formula $X\in\mathfrak{U}$ we cannot the second-order predicate $X$ with a formula $\psi$.

With the terms $X_n$, however, it is possible to deal with this: the set can be written
\[\{n\mid \phi(n,X_n)\}\]
where $X_n$ is a free variable, and then we can carry our our proof under the assumption that $X=\{(n,m)\mid \psi(n,m)\}$.

\begin{definition}
If $T$ is a theory of second-order arithmetic, we define $T+\exists\mathfrak{U}$ to be the theory in $\mathcal{L}^{\mathfrak{U}}$ whose axioms consist of the axioms of $T$ together with the axioms of $\exists\mathfrak{U}$. 
\end{definition}

\begin{definition}
We write $Ult(U)$ for the formula of $\mathcal{L}^2$ stating that for every finite set $F$, $\bigcap_{n\in F}U_n$ is infinite.
\end{definition}

\begin{theorem}
The theory \RCAU{} implies \ACAU{}.
\end{theorem}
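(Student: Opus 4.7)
The plan is to derive \ACA{} from its standard equivalent over \RCA{}: the existence of $\ran(f)$ for an arbitrary injection $f:\mathbb{N}\to\mathbb{N}$. I would construct, using only $\Delta^0_1$ comprehension, a uniform family of sets $X_n$ for which $X_n\in\mathfrak{U}$ exactly when $n\in\ran(f)$, and then extract the range by applying comprehension to the formula $X_n \in \mathfrak{U}$.

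Concretely, set $X = \{(n,m) : \exists k \leq m\; f(k) = n\}$; its defining formula is bounded, so $X$ exists in \RCA{}. If $n = f(k_0)$ then every $m \geq k_0$ lies in $X_n$, so $X_n$ is cofinite; its complement is bounded, hence violates clause (1) of $\exists\mathfrak{U}$, hence is not in $\mathfrak{U}$, and by clause (4) we conclude $X_n \in \mathfrak{U}$. If $n\notin\ran(f)$ then $X_n = \emptyset$, which trivially violates (1) and so is not in $\mathfrak{U}$. Thus $\ran(f) = \{n : X_n \in \mathfrak{U}\}$.

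It remains to form this set by comprehension, and this is the one place where the extended language earns its keep. Axioms (1)--(4) together give $X_n \in \mathfrak{U} \leftrightarrow \neg(X_n^c \in \mathfrak{U})$, so the defining formula is $\Delta^0_1$ in $\mathcal{L}^{\mathfrak{U}}$ once one treats $\mathfrak{U}(\cdot)$ as a new atomic predicate. This is precisely why the term notation $X_n$ was built into the language just before the theorem: it turns ``$X_n \in \mathfrak{U}$'' into a legal formula with free variable $n$, so that $\Delta^0_1$ comprehension can be applied. The main obstacle I expect is finding an $X_n$ that exists by $\Delta^0_1$ comprehension in \RCA{} and whose ultrafilter membership nonetheless detects the $\Sigma^0_1$ property $n\in\ran(f)$; the ``approximate range'' construction handles this by reducing the discriminating question to ``cofinite or empty'', which every non-principal ultrafilter decides in opposite ways.
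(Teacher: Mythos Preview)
Your proof is correct and considerably more direct than the paper's. The paper proceeds by a different route: it shows that \RCAU{} proves Ramsey's Theorem for triples (using the ultrafilter to carry out an iterated Ramsey-for-pairs construction), and then invokes the standard reversal $RT^3_2\Rightarrow\ACA$ over \RCA{}. Your argument instead attacks the range-of-injection formulation of \ACA{} head-on, observing that a nonprincipal ultrafilter separates cofinite sets from empty ones, which is exactly the dichotomy exhibited by your approximate-range set $X_n$. What your approach buys is brevity and minimality of hypotheses: you use nothing about $\mathfrak{U}$ beyond ``every member is infinite'' plus dichotomy. What the paper's approach buys is a demonstration of the combinatorial power of the predicate---the uniform Ramsey-for-pairs lemma proved along the way is of independent interest and models the sort of argument the $\exists\mathfrak{U}$ extension is designed to support.
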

\begin{proof}
It suffices to show that \RCAU{} proves Ramsey's Theorem for triples, since this is known to imply arithmetic comprehension (see \cite{simpson99}).

First, observe that we can prove that whenever we have a partition $\mathbb{N}=S_0\cup\cdots\cup S_a$, there is a (necessarily unique) $b\leq a$ such that $S_b\in\mathfrak{U}$.  (This is trivial when $a$ is a genuine natural number, but in general $a$ might be some nonstandard number.)  This is because we may apply induction to find a least $b$ such that $\bigcup_{c\leq b}S_c\in\mathfrak{U}$, and since $\bigcup_{c\leq b-1}S_c\not\in\mathfrak{U}$, we must have $S_b\in\mathfrak{U}$.

Next we show a strong form of Ramsey's Theorem for pairs.  Suppose that for each $n$, we have $c_n:[\mathbb{N}]^2\rightarrow\{0,1\}$; we claim there is an infinite sequence $h_0<h_1<\cdots$ so that for every $n$, $c_n$ is monochromatic on $[\{h_n,h_{n+1},\ldots\}]^2$.  For each $x$, we may induce a coloring on $\mathbb{N}\setminus[0,x]$ by $c^x(y)=\{n\leq x\mid c_n(x,y)=0\}$.  This is a finite coloring, so there is some $S_x\subseteq [0,x]$ such that $\{y\mid c^x(y)=S\}\in\mathfrak{U}$.  For each $n$, let $T_n=\{x\mid n\in S_x\}$.  We now inductively construct a sequence $h_0<h_1<\cdots$ so that if $n\leq i$ then $n\in S_{h_i}$ iff $T_n\in\mathfrak{U}$, and if $i<j$ then $c^{h_i}(h_j)=S_{h_i}$.  Given $h_0,\ldots,h_{n}$, this requires that $h_{n+1}$ be a member of a finite list of sets, all belonging to $\mathfrak{U}$, so the intersection of these sets is infinite and we may choose $h_{n+1}$ to be the least element of the intersection greater than $h_n$.  Given $n\leq i<j$, we have $c^{h_i}(h_j)=S_{h_i}$ and $n\in S_{h_i}$ iff $T_n\in\mathfrak{U}$, so $c_n(h_i,h_j)=0$ iff $T_n\in\mathfrak{U}$; in particular, this is independent of the choice of $i,j$.

Finally, to show Ramsey's Theorem for triples, let $c:[\mathbb{N}]^3\rightarrow\{0,1\}$ be given.  For each $n$, we induce a coloring $c_n(x,y)=c(n,x,y)$, and so we may choose an infinite sequence $h_0<h_1<\cdots$ so that whenever $h_i<j<k$, $c(h_i,h_j,h_k)$ depends only on $h_i$.  The color induced by $i$ is computable, so by the infinite pigeonhole principle, we may restrict to a subset where $c(h_i,h_j,h_k)$ is constant so long as $h_i<j<k$.  Finally, by taking the subsequence $k_0=h_0$, $k_{n+1}=h_{k_n+1}$, we obtain a subsequence where $c$ is monochromatic.
\end{proof}

\section{Forcing}
\begin{definition}
If $Ult(U)$ holds, we say $U$ is a \emph{condition}.  We write $V\preceq U$ if $Ult(V)$ and for each $n$ there is an $m$ such that $V_m\subseteq U_n$.

We define by recursion on a formula $\phi$ of $\mathcal{L}^{\mathfrak{U}}$ with free variables $\vec x,\vec X$ a formula $U\Vdash\phi$ in $\mathcal{L}^2$ with free variables $\vec x,\vec X,U$ by:
\begin{enumerate}
  \item If $\phi$ is an atomic formula not containing $\mathfrak{U}$, $U\Vdash \phi$ is simply $\phi$,
  \item If $\phi$ is $T\in\mathfrak{U}$ then $U\Vdash \phi$ is the formula ``there is a finite $F$ such that $\bigcap_{n\in F} U_n\setminus T$ is finite''
  \item $U\Vdash\phi\wedge\psi$ is the formula $(U\Vdash\phi)\wedge(U\Vdash\psi)$,
  \item $U\Vdash\phi\vee\psi$ is the formula $\forall^2 V\preceq U\exists^2 W\preceq V (W\Vdash\phi\vee W\Vdash\psi)$,
  \item $U\Vdash\neg\phi$ is the formula $\forall^2 V\preceq U V\not\Vdash\phi$,
  \item $U\Vdash\phi\rightarrow\psi$ is the formula $\forall^2 V\preceq U (V\Vdash \phi\rightarrow V\Vdash \psi)$,
  \item $U\Vdash\forall x\phi$ is the formula $\forall^2 x U\Vdash\phi$,
  \item $U\Vdash\exists x\phi$ is the formula $\forall^2 V\preceq U \exists^2 W\preceq V \exists x W\Vdash\phi$,
  \item $U\Vdash\forall^2 X\phi$ is the formula $\forall^2 X U\Vdash\phi$,
  \item $U\Vdash\exists^2 X\phi$ is the formula $\forall^2 V\preceq U \exists^2 W\preceq V \exists^2 X W\Vdash\phi$,
\end{enumerate}

We write $\Vdash\phi$ for $\emptyset\Vdash\phi$.
\end{definition}

We may take $\neg\phi$ to be an abbreviation for $\phi\rightarrow 0\neq 1$, and then take $\vee,\exists,\exists^2$ to be abbreviations for their de Morgan equivalents.  It is easy to check that the definition of forcing remains unchanged, and this allows us to consider only the atomic, $\wedge$, $\rightarrow$, $\forall$, and $\forall^2$ cases when we give inductive proofs.

\begin{lemma}
For each formula $\phi$ with free variables $\vec x,\vec X$, \RCA{} proves
\[\forall^2 U\forall^2 \vec X\forall \vec x\forall^2 V\preceq U (U\Vdash\phi\rightarrow V\Vdash\phi).\]
\label{monotonicity}
\end{lemma}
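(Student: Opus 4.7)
The plan is to prove the lemma by induction on the complexity of $\phi$, using the abbreviation convention noted just before the lemma so that only the atomic, $\wedge$, $\rightarrow$, $\forall$, and $\forall^2$ cases must be handled explicitly. The one preliminary fact I will extract is transitivity of $\preceq$: if $W \preceq V$ and $V \preceq U$, then given $n$, pick $m$ with $V_m \subseteq U_n$, then pick $k$ with $W_k \subseteq V_m$, whence $W_k \subseteq U_n$. This is provable in \RCA{} directly from the definition.

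For the base cases: if $\phi$ is atomic without $\mathfrak{U}$, then $U \Vdash \phi$ does not mention $U$ at all, so there is nothing to do. The only genuine content lies in the case $\phi \equiv T \in \mathfrak{U}$. Suppose $U \Vdash \phi$, witnessed by a finite $F$ with $\bigcap_{n \in F} U_n \setminus T$ finite. Using $V \preceq U$, for each $n \in F$ choose $m_n$ with $V_{m_n} \subseteq U_n$, and let $F' = \{m_n : n \in F\}$, which is a finite set obtainable by bounded recursion in \RCA{}. Then
\[\bigcap_{m \in F'} V_m \;\subseteq\; \bigcap_{n \in F} V_{m_n} \;\subseteq\; \bigcap_{n \in F} U_n,\]
so $\bigcap_{m \in F'} V_m \setminus T \subseteq \bigcap_{n \in F} U_n \setminus T$ is finite, giving $V \Vdash \phi$.

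For the inductive step, the cases $\wedge$, $\forall x$, and $\forall^2 X$ are immediate from the induction hypothesis applied componentwise, since the forcing condition $U$ does not change role in these clauses. The $\rightarrow$ case is where transitivity of $\preceq$ is used: $U \Vdash \phi \rightarrow \psi$ says that every $W \preceq U$ with $W \Vdash \phi$ satisfies $W \Vdash \psi$; given $V \preceq U$, any $W \preceq V$ is automatically $\preceq U$ by transitivity, so the required universal statement for $V$ is just a restriction of the one already assumed for $U$.

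The main ``obstacle'' is really just organizational: since the definition of forcing is mutually recursive across the clauses, I have to be careful that the induction is on a single inductive measure (the syntactic complexity of $\phi$), independent of $U$ and $V$, so that it is formalizable in \RCA{} by $\Sigma^0_1$ induction on the code of $\phi$; the arithmetic manipulations inside each case are bounded and unproblematic. Nothing subtle happens with the second-order quantifiers because only the $T \in \mathfrak{U}$ clause actually ``uses'' the condition, and that case is dispatched by the finite translation of witnesses above.
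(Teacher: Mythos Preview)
Your argument is correct and matches the paper's proof essentially verbatim: the same induction on $\phi$, the same finite re-indexing $F\mapsto F'$ in the $T\in\mathfrak{U}$ case, and the same appeal to transitivity of $\preceq$ in the $\rightarrow$ case. One correction to your closing paragraph: the induction on $\phi$ is carried out \emph{externally} in the metatheory, not inside \RCA{} by $\Sigma^0_1$ induction on a code for $\phi$; the lemma is a schema (one instance per formula), and the paper explicitly emphasizes this point.
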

\begin{proof}
By induction on $\phi$.  (Since this is the first of many similar arguments, we point out explicitly that the induction on $\phi$ is being carried out externally to the theory \RCA{}.  That is, in this and later proofs we are working in ordinary mathematics reasoning about inductively about the formal theory \RCA{}.)

Let $U,\vec X,\vec x,V\preceq U$ be given.
\begin{enumerate}
  \item If $\phi$ is an atomic formula not containing $\mathfrak{U}$, this is immediate from the definition,
  \item If $\phi$ is $T\in\mathfrak{U}$ then $U\Vdash\phi$ means there is a finite $F$ such that $\bigcap_{n\in F}U_n\setminus T$ is finite; taking $G$ to be a set so that for each $n\in F$ there is an $m\in G$ such that $V_m\subseteq U_n$, we have $\bigcap_{m\in G}V_m\subseteq\bigcap_{n\in F}U_n$, so $V\Vdash\phi$,
  \item If $\phi$ is $\psi\wedge\chi$, the claim follows immediately from IH,
  \item If $\phi$ is $\forall y\psi$ then by IH we have $\forall y(U\Vdash\psi\rightarrow V\Vdash\psi)$, and therefore $U\Vdash\forall y\psi\rightarrow V\Vdash\forall y\psi$,
  \item The case for $\forall^2 Y\psi$ is similar,
  \item If $\phi$ is $\psi\rightarrow\chi$ and $U\Vdash\psi\rightarrow\chi$ then whenever $W\preceq V$ and $W\Vdash\psi$, also $W\preceq U$, and therefore $W\Vdash\chi$, so $V\Vdash\psi\rightarrow\chi$.
\end{enumerate}
\end{proof}

\begin{theorem}
For each formula $\phi$ of $\mathcal{L}^2$ with free variables $\vec x,\vec X$, \RCA{} proves
\[\forall^2 U \forall^2 \vec X\forall \vec x((U\Vdash\phi)\leftrightarrow\phi).\]
\label{reflection}
\end{theorem}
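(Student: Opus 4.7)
The proof is by external induction on the complexity of $\phi$. Using the remark immediately after the forcing definition, I reduce to the cases of atomic formulas (none involving $\mathfrak{U}$, since $\phi\in\mathcal{L}^2$), $\wedge$, $\to$, $\forall$, and $\forall^2$, treating the remaining connectives as de Morgan abbreviations.

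The atomic case is handed to us by clause (1) of the forcing definition. The cases $\psi\wedge\chi$, $\forall y\,\psi$, and $\forall^2 Y\,\psi$ are also immediate: in each the forcing relation is defined to commute with the connective (no quantifier over extensions is introduced), so the inductive hypothesis applied pointwise to the immediate subformula(s) gives $U\Vdash\phi\leftrightarrow\phi$ at once.

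The one substantive case is $\phi\equiv\psi\to\chi$, for which $U\Vdash(\psi\to\chi)$ unpacks to $\forall^2 V\preceq U\,(V\Vdash\psi\to V\Vdash\chi)$. For the $(\Leftarrow)$ direction, assume $\psi\to\chi$; given any $V\preceq U$ with $V\Vdash\psi$, two applications of the IH at $V$ convert $V\Vdash\psi$ into $\psi$, whence $\chi$, whence $V\Vdash\chi$. For the $(\Rightarrow)$ direction, assume $U\Vdash(\psi\to\chi)$ and $\psi$; taking $V:=U$, which extends itself via $Ult(U)$, the IH promotes $\psi$ to $U\Vdash\psi$, forcing yields $U\Vdash\chi$, and a final application of the IH gives $\chi$.

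The main (and essentially only) obstacle is exhibiting the witness $V\preceq U$ in the $(\Rightarrow)$ direction of the $\to$-case: one needs an actual extension of $U$ on which to invoke the IH. Provided $U$ is itself a condition, $V:=U$ suffices, so I read the theorem as implicitly quantifying $U$ over conditions, consistent with the role of $U$ everywhere else in the forcing development (a non-condition $U$ has no extensions $V\preceq U$, in which case $U$ would vacuously force every implication and the equivalence would be hopeless). Modulo that reading, the induction runs without any further complication.
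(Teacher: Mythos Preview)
Your proof is correct and follows essentially the same inductive approach as the paper: both reduce to atomic, $\wedge$, $\forall$, $\forall^2$, and $\to$ cases, with only the $\to$ case requiring work, handled by applying the inductive hypothesis at $U$ (for the forward direction) and at an arbitrary $V\preceq U$ (for the backward direction). Your explicit observation that one must read $U$ as ranging over conditions---so that $U\preceq U$ is available in the $(\Rightarrow)$ direction---is a point the paper leaves implicit; otherwise the two arguments coincide up to the paper's redundant case split on the truth of $\psi$.
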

\begin{proof}
By induction on $\phi$.  Let $U,\vec X,\vec x$ be given.
\begin{enumerate}
  \item If $\phi$ is an atomic formula not containing $\mathfrak{U}$, this is immediate from the definition,
  \item Atomic formulas containing $\mathfrak{U}$ may not appear in $\phi$,
  \item If $\phi$ is $\psi\wedge\chi$ then by IH we have $(U\Vdash\psi)\wedge(U\Vdash\chi)\leftrightarrow(\psi\wedge\chi)$, and the left-hand side is the definition of $U\Vdash\psi\wedge\chi$.
  \item If $\phi$ is $\forall y\psi$ then by IH we have $\forall y(U\Vdash\psi\leftrightarrow\psi)$, and so we have $(\forall y U\Vdash\psi)\leftrightarrow\forall y\psi$, as desired,
  \item The case for $\forall^2 Y\psi$ is similar,
  \item If $\phi$ is $\psi\rightarrow\chi$, we prove the two directions separately.  Suppose $U\Vdash\psi\rightarrow\chi$; if $\psi$ holds then by IH, $U\Vdash\psi$, and therefore $U\Vdash\chi$, so again by IH, $\chi$ holds, and therefore $\psi\rightarrow\chi$ holds, while if $\psi$ fails then $\psi\rightarrow\chi$ holds trivially.  If $\psi\rightarrow\chi$ holds then either $\psi$ fails or $\chi$ holds; in the latter case, $U\Vdash\chi$ by IH, so $U\Vdash\psi\rightarrow\chi$.  In the former case, since $\psi$ fails, for all $V$ we have $V\not\Vdash\psi$, and therefore $U\Vdash\psi\rightarrow\chi$.
\end{enumerate}
\end{proof}

In particular, this means that if $\sigma$ is a sentence of $\mathcal{L}^2$ and $T$ is any theory extending \RCA{} which proves that $U\Vdash\sigma$ then also $T\vdash\sigma$.  The next step will be showing that whenever $\ACAU\vdash\phi$, \ACA{} proves $\Vdash\phi$.  Naturally, we will show this by induction on proofs, demonstrating that \ACA{} proves that it can force all axioms and rules of \ACAU{}.

We start with the logical axioms; the double negation law will require a bit of work.

\begin{lemma}
For each $\phi$ with free variables $\vec x,\vec X$, \RCA{} proves $\Vdash\forall^2\vec X\forall x(\phi\rightarrow\neg\neg\phi)$.
\end{lemma}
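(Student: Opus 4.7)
The plan is to unfold the forcing semantics clause by clause until the goal reduces directly to Lemma \ref{monotonicity}; no further forcing machinery should be needed. First I would strip off the universal prefix: since the clauses for $\forall^2$ and $\forall$ simply commute past the forcing relation, it suffices to fix arbitrary $\vec X$, $x$, and an arbitrary condition $V$ and show that $V \Vdash \phi \rightarrow \neg\neg\phi$, which by the $\rightarrow$-clause means: for every $W\preceq V$ with $W\Vdash\phi$, also $W\Vdash\neg\neg\phi$. (The outermost $\emptyset\Vdash$ is handled by the same argument applied at the top condition.)

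Next I would unfold $\neg\neg\phi$. Recall that $\neg\psi$ abbreviates $\psi\rightarrow 0{=}1$, and that $W\Vdash 0{=}1$ is the atomic (and false) statement $0{=}1$. Applying the $\rightarrow$-clause once gives $W\Vdash\neg\phi$ iff $\forall^2 W'\preceq W\, W'\not\Vdash\phi$, matching the explicit $\neg$-clause in the definition. Applying it once more, $W\Vdash\neg\neg\phi$ becomes $\forall^2 W'\preceq W\, W'\not\Vdash\neg\phi$, i.e.\ $\forall^2 W'\preceq W\,\exists^2 W''\preceq W'\, W''\Vdash\phi$, where the inner existential is obtained by the classical flip of a negated universal (legitimate in \RCA{}).

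With the unfolding in place, the proof is a single line: assume $W\Vdash\phi$ and fix any $W'\preceq W$; by Lemma \ref{monotonicity} (monotonicity of forcing), $W'\Vdash\phi$, so one may take $W''=W'$ as the required witness. This closes the argument uniformly in $\vec X$, $x$, and the ambient condition.

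There is no real obstacle here: all the content is carried by monotonicity, and the only care needed is bookkeeping through the two nested $\rightarrow$-clauses of the double negation (in particular, recognizing that $W\Vdash 0{=}1$ collapses to falsity so that the two implications do not degenerate vacuously). The external induction on $\phi$ used in the monotonicity lemma is not re-invoked, since here $\phi$ is fixed and we merely apply the already-proved lemma as a black box.
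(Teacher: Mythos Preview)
Your proposal is correct and follows essentially the same route as the paper: unfold $U\Vdash\neg\neg\phi$ to $\forall V\preceq U\,\exists W\preceq V\,W\Vdash\phi$, then use monotonicity (Lemma \ref{monotonicity}) to pass from $U\Vdash\phi$ to $V\Vdash\phi$ and take $W=V$. The only difference is cosmetic---you spell out the intermediate unfolding of $\neg\phi$ and handle the outer $\rightarrow$-clause more explicitly, whereas the paper jumps straight to the equivalent form of $\neg\neg\phi$.
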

\begin{proof}
Note that $U\Vdash\neg\neg\phi$ is equivalent to $\forall V\preceq U \exists W\preceq V W\Vdash\phi$.  Let $\vec X,\vec x$ be given, and suppose $U\Vdash\phi$.  We must show $U\Vdash\neg\neg\phi$, so let $V\preceq U$ be given.  Since $V\Vdash\phi$ by Lemma \ref{monotonicity},  it follows that there is some $W\preceq V$ such that $W\Vdash\phi$.  This holds for any $V\preceq U$, so $U\Vdash\neg\neg\phi$.
\end{proof}

\begin{lemma}
For each $\phi$ with free variables $\vec x,\vec X$, \RCA{} proves $\Vdash\forall^2\vec X\forall\vec x(\neg\neg\phi\rightarrow\phi)$.
\end{lemma}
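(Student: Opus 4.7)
The plan is induction on $\phi$, following the same case breakdown as Lemma \ref{monotonicity}. First I would unfold the target: $U \Vdash \neg\neg\phi$ amounts to saying that for every $V \preceq U$ there is $W \preceq V$ with $W \Vdash \phi$, so the task is to show that density of $\phi$-forcing below $U$ already implies $U \Vdash \phi$.

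The only genuinely new case is the atomic formula $\phi \equiv T \in \mathfrak{U}$, and this is the step I expect to be the main obstacle. I would argue the contrapositive: assume $U \not\Vdash T \in \mathfrak{U}$, so that for every finite $F$ the set $\bigcap_{n \in F} U_n \setminus T$ is infinite. Construct a condition $V$ by prepending $T^c$ to the enumeration of $U$: put $V_0 = \mathbb{N} \setminus T$ and $V_{n+1} = U_n$. Then $V \preceq U$ obviously, and $Ult(V)$ holds because any finite intersection of the $V_n$ is either a finite intersection of $U_n$'s or else $T^c$ intersected with such, both infinite by hypothesis together with $Ult(U)$. Now if $W \preceq V$, there is some $m$ with $W_m \subseteq V_0 = T^c$, and for any finite $F$ we have $\bigcap_{n \in F \cup \{m\}} W_n \subseteq \bigcap_{n \in F} W_n \setminus T$; if the right side were finite, $Ult(W)$ would fail. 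Hence no $W \preceq V$ forces $T \in \mathfrak{U}$, i.e.\ $V \Vdash \neg(T \in \mathfrak{U})$, contradicting $U \Vdash \neg\neg(T \in \mathfrak{U})$.

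The remaining cases are routine propagation of $\neg\neg$-stability. For atomic $\phi$ not involving $\mathfrak{U}$, $U \Vdash \phi$ is just $\phi$ and density alone supplies a witness. For $\phi = \psi \wedge \chi$, $U \Vdash \neg\neg(\psi \wedge \chi)$ immediately yields both $U \Vdash \neg\neg\psi$ and $U \Vdash \neg\neg\chi$, and IH gives $U \Vdash \psi \wedge \chi$. For $\phi = \psi \to \chi$, suppose $V \preceq U$ and $V \Vdash \psi$; I would show $V \Vdash \neg\neg \chi$ and then apply IH. Given $V' \preceq V$, density at $U$ gives $W \preceq V'$ with $W \Vdash \psi \to \chi$, and Lemma \ref{monotonicity} gives $W \Vdash \psi$, hence $W \Vdash \chi$. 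Universal cases $\forall y$ and $\forall^2 Y$ are analogous: density of $\forall y \psi$-forcing gives, for each $y$, density of $\psi$-forcing, so $U \Vdash \neg\neg\psi$ for each $y$, and IH closes.

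The hard part is the $T \in \mathfrak{U}$ case above; in particular one must verify inside \RCA{} that the interleaved coding of $V$ really is a set (immediate with the pairing convention the paper has already fixed) and that $Ult(V)$ and $V \preceq U$ follow formally from $Ult(U)$ together with the assumption that $\bigcap_{n \in F} U_n \setminus T$ is infinite for every finite $F$. All other cases are simple manipulations of the density clause through the logical connective and an appeal to IH.
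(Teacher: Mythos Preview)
Your proposal is correct and follows essentially the same argument as the paper's proof: induction on $\phi$, with the only substantive case being $T\in\mathfrak{U}$, handled by the contrapositive via the condition $U\cup\{T^c\}$ (your $V$ with $V_0=T^c$, $V_{n+1}=U_n$). Your treatment of the $\to$, $\wedge$, and quantifier cases matches the paper's as well; you simply spell out a bit more detail, e.g.\ verifying explicitly that no $W\preceq V$ can force $T\in\mathfrak{U}$.
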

\begin{proof}
We proceed by induction on $\phi$.
\begin{enumerate}
  \item Suppose $\phi$ is atomic and does not contain $\mathfrak{U}$.  If $U\Vdash\neg\neg\phi$ then there is some $W\preceq U$ such that $W\Vdash\phi$, and therefore $\phi$ must be true, so $U\Vdash\phi$,
  \item If $\phi$ is $T\in\mathfrak{U}$ but $U\not\Vdash\phi$ then there is no finite $F\subseteq U$ such that $\bigcap F\setminus T$ is finite.  It follows that $Ult(U\cup\{T^c\})$ holds, and therefore $U\cup\{T^c\}\preceq U$ and $U\cup\{T^c\}\Vdash\neg\phi$, so $U\not\Vdash\neg\neg\phi$,
  \item If $\phi$ is $\forall y\psi$ and $U\Vdash\neg\neg\phi$, we have $\forall V\preceq U\exists W\preceq V\forall y W\Vdash\phi$, and so also $\forall y(\forall V\preceq U\exists W\preceq V W\Vdash\phi)$, so $\forall y U\Vdash\neg\neg\psi$, and so by IH, $\forall y U\Vdash\psi$.  The $\forall^2$ and $\wedge$ cases are similar,
  \item If $\phi$ is $\psi\rightarrow\chi$ and $U\Vdash\neg\neg\phi$, consider any $V\preceq U$ with $V\Vdash\psi$.  We will show $V\Vdash\neg\neg\chi$, which by IH implies $V\Vdash\chi$.  Let $W\preceq V$ be given; then since $W\preceq U$, there is a $W'\preceq W$ such that $W'\Vdash\psi\rightarrow\chi$.  Since $W'\preceq V$, so $W'\Vdash\psi$, we have $W'\Vdash\chi$.  This shows that $V\Vdash\neg\neg\chi$, as desired.
\end{enumerate}
\end{proof}

\begin{lemma}
If $\phi$ is a logical axiom with free variables $\vec x,\vec X$ then \RCA{} proves $\Vdash\forall^2 \vec X\forall \vec x\phi$.
\label{log_axiom}
\end{lemma}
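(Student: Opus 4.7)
The plan is to proceed by cases on the logical axiom schema. Since $\neg$, $\vee$, $\exists$, and $\exists^2$ are abbreviations in terms of the primitive connectives $\wedge$, $\rightarrow$, $\forall$, $\forall^2$ and $\bot$, it suffices to verify the axioms of a Hilbert-style system whose only connectives are these primitives, plus the equality axioms. A convenient such axiomatization consists of the schemata $\phi\rightarrow(\psi\rightarrow\phi)$, $(\phi\rightarrow(\psi\rightarrow\chi))\rightarrow((\phi\rightarrow\psi)\rightarrow(\phi\rightarrow\chi))$, the three conjunction axioms $\phi\wedge\psi\rightarrow\phi$, $\phi\wedge\psi\rightarrow\psi$, $\phi\rightarrow(\psi\rightarrow\phi\wedge\psi)$, the universal instantiation schema $\forall y\,\phi\rightarrow\phi[t/y]$ and its second-order analogue, the quantifier-distribution schema $\forall y(\psi\rightarrow\phi)\rightarrow(\psi\rightarrow\forall y\,\phi)$ when $y$ is not free in $\psi$ (and similarly for $\forall^2$), equality axioms, and the double negation elimination axiom $\neg\neg\phi\rightarrow\phi$, which has already been handled by the preceding lemma.

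The key observation that makes the verification routine is that for the primitive connectives, the forcing relation behaves almost semantically: $U\Vdash\phi\wedge\psi$ iff $U\Vdash\phi$ and $U\Vdash\psi$; $U\Vdash\phi\rightarrow\psi$ iff every $V\preceq U$ forcing $\phi$ also forces $\psi$; and $U\Vdash\forall y\,\phi$ iff $U\Vdash\phi$ holds for every $y$ (and similarly for $\forall^2$). Monotonicity (Lemma \ref{monotonicity}) guarantees that each of these conditions is preserved under refinement of the condition, so each axiom schema unfolds to a direct chase through the forcing clauses. For example, for $\phi\rightarrow(\psi\rightarrow\phi)$, assume $V\preceq\emptyset$ with $V\Vdash\phi$; given any $W\preceq V$ with $W\Vdash\psi$, we have $W\Vdash\phi$ by monotonicity, so $V\Vdash\psi\rightarrow\phi$. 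The conjunction axioms and the second implication axiom unfold in the same pattern. For $\forall y\,\phi\rightarrow\phi[t/y]$, if $V\Vdash\forall y\,\phi$ then $V\Vdash\phi(y)$ for every value of $y$, and in particular for $y$ instantiated as $t$; the quantifier-distribution axiom similarly reduces to rearranging a universal quantifier past an implication, using the semantic clause for $U\Vdash\forall y\,\phi$.

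Equality axioms, being assertions about atomic formulas not containing $\mathfrak{U}$, reduce by the first clause of the forcing definition to the underlying atomic statements themselves and hold trivially. The second-order versions of the quantifier axioms are handled identically to the first-order ones, with $\forall^2$ playing the role of $\forall$.

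I expect the main bookkeeping obstacle to lie in substitution: one must verify, by a routine external induction on $\phi$, that $V\Vdash\phi[t/y]$ is equivalent to $V\Vdash\phi$ with $y$ assigned the value of $t$, and similarly for second-order substitution into the composite terms $X_t$ introduced at the start of the section. The latter is unproblematic because the defining axiom $s\in T_t\leftrightarrow(t,s)\in T$ is a quantifier-free formula not involving $\mathfrak{U}$, so it is forced iff it is true by Theorem \ref{reflection}. Once these substitution lemmas are in hand, no genuine obstacle remains: the entire result is a case-by-case verification enabled by the semantic transparency of forcing for the primitive connectives together with the double negation lemma just established.
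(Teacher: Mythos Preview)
Your proposal is correct and follows the same line as the paper's proof: a case-by-case verification of the logical axioms using the ``semantic'' unfolding of the forcing clauses for the primitive connectives, together with monotonicity and the double-negation lemma already established. The paper is simply much terser---it says most cases are ``easily satisfied by checking the definitions,'' singles out double negation and the quantifier axioms, and leaves the rest implicit---whereas you spell out a specific Hilbert system, sketch a representative propositional case, and flag the substitution lemma as the one piece of bookkeeping; this is exactly the content the paper omits.
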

\begin{proof}
It is easy to check that most logical axioms in one's preferred system are easily satisfied by checking the definitions.  The double negation law is covered by the previous lemma.

The only other axioms we explicitly check are the quantifier axioms.  Observe that $\Vdash(\forall x \phi\rightarrow\phi[t/x])$ since if $U\Vdash\forall x\phi$, we have $\forall x (U\Vdash\phi)$, and therefore $U\Vdash\phi[t/x]$.  The case for $\forall^2 X\phi\rightarrow\phi[Y/X]$ is similar.  To see $\Vdash(\phi[t/x]\rightarrow\exists x\phi)$, observe that if $U\Vdash\phi[t/x]$ then for any $V\preceq U$, $V\Vdash\phi[t/x]$, so $\exists x V\Vdash\phi$, so $U\Vdash\exists x\phi$.  Again, the case for $\exists^2$ is similar.
\end{proof}

 \begin{lemma}
 For any $\phi,\psi$, \RCA{} proves that $\forall^2 U\forall^2\vec X\forall\vec x(U\Vdash\phi\wedge U\Vdash(\phi\rightarrow\psi)\rightarrow U\Vdash\psi)$.
 \label{rule_first}
 \end{lemma}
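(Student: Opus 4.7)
The plan is to unfold the definition of $U \Vdash (\phi \rightarrow \psi)$ and instantiate at $V := U$. By the definitions of Section~3, this forcing relation expands to $\forall^2 V \preceq U (V \Vdash \phi \rightarrow V \Vdash \psi)$, so the entire argument reduces to verifying the reflexivity $U \preceq U$. That is immediate whenever $Ult(U)$ holds: take $m := n$ to witness the containment clause ``for each $n$ there is an $m$ with $V_m \subseteq U_n$'', and $Ult(U)$ directly supplies the other requirement in the definition of $\preceq$. Substituting $V := U$ into the unfolded hypothesis then yields $U \Vdash \phi \rightarrow U \Vdash \psi$, and combining this with the assumed $U \Vdash \phi$ produces $U \Vdash \psi$, as required.

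There is no real obstacle here: in contrast with the preceding lemmas, no induction on the complexity of $\phi$ or $\psi$ is needed, and the statement is just ordinary modus ponens transported across forcing via the trivial refinement $U \preceq U$. The one point I would be careful to flag in the write-up is the implicit appeal to $Ult(U)$ in order to justify reflexivity of $\preceq$; this matches the standing convention throughout the section that the second-order variables over which the forcing relation is quantified represent conditions (and in applications of this lemma in the sequel, e.g.\ in the modus ponens step of the forthcoming induction on proofs, the relevant $U$ will always be a condition).
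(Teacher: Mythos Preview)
Your proof is correct and matches the paper's argument essentially verbatim: unfold $U\Vdash(\phi\rightarrow\psi)$ and instantiate $V:=U$ via reflexivity of $\preceq$. Your explicit remark that $Ult(U)$ is what licenses $U\preceq U$ is a useful clarification the paper leaves implicit.
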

 \begin{proof}
 Suppose $U\Vdash\phi$ and $U\Vdash(\phi\rightarrow\psi)$.  Then for any $V\preceq U$ such that $V\Vdash\phi$, $V\Vdash\psi$.  But $U$ itself is such a $V$, so $U\Vdash\psi$.
 \end{proof}

Next we turn to the axioms of second order arithmetic.
\begin{definition}
We say $U$ \emph{decides} $\phi$ (with respect to values $\vec x,\vec X$ for the free variables in $\phi$) if either $U\Vdash\phi$ or $U\Vdash\neg\phi$.

Let $\phi(\vec x,\vec X)$ be an arithmetic formula with only the displayed free variables; we say $U$ \emph{recursively decides} $\phi$ with respect to $\vec X$ if:
\begin{enumerate}
  \item $\phi$ is atomic and does not contain $\mathfrak{U}$,
  \item $\phi$ is $T\in\mathfrak{U}$ and for every $\vec x$ there is a finite $F$ such that either $\bigcap_{n\in F}U_n\setminus T$ is finite or $\bigcap_{n\in F}U_n\cap T$ is finite,
  \item $\phi$ is $\neg\psi$ and $U$ recursively decides $\psi$ with respect to $\vec X$,
  \item $\phi$ is $\psi\wedge\chi$, $\psi\vee\chi$, or $\psi\rightarrow\chi$ and $U$ recursively decides $\psi$ and $\chi$ with respect to $\vec X$,
  \item $\phi$ is $\forall x\psi$ or $\exists x\psi$ and $U$ recursively decides $\psi$ with respect to $\vec X$.
\end{enumerate}
\end{definition}

It is easy to see by recursion that:
\begin{lemma}
If $U$ recursively decides $\phi$ then for each $\vec x$ $U$ decides $\phi$ and $\{\vec x\mid U\Vdash\phi\}$ is arithmetic (in the parameters $\vec X$).
\end{lemma}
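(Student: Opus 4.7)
The plan is to proceed by induction on the complexity of $\phi$, following the clauses in the definition of recursively decides, and to carry both conclusions along simultaneously. At each stage I will verify (a) that for every $\vec x$, $U$ decides $\phi$, and (b) that the set $\{\vec x \mid U \Vdash \phi\}$ is defined by an arithmetic formula whose set parameters are among $\vec X, U$. Keeping both assertions in the same induction is important because, once $U$ is known to decide $\phi$, the outer second-order quantifiers in the definitions of forcing collapse and the arithmetic description of the set becomes visible.

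For the atomic base case not involving $\mathfrak{U}$, the forcing $U \Vdash \phi$ is just $\phi$, so both conclusions are immediate. The substantive base case is $\phi \equiv T \in \mathfrak{U}$. By hypothesis, for each $\vec x$ some finite $F$ either makes $\bigcap_{n \in F} U_n \setminus T$ finite, which directly gives $U \Vdash T \in \mathfrak{U}$, or makes $\bigcap_{n \in F} U_n \cap T$ finite. In the latter subcase I would prove $U \Vdash \neg(T \in \mathfrak{U})$ as follows: given an arbitrary $V \preceq U$ supposedly forcing $T \in \mathfrak{U}$ via a witnessing finite $G$, enlarge $G$ to $G'$ by adjoining, for each $n \in F$, some $m_n$ with $V_{m_n} \subseteq U_n$; then $\bigcap_{m \in G'} V_m \subseteq \bigcap_{n \in F} U_n$, and splitting this set into its intersection with $T$ and its complement in $T$ exhibits it as a union of two finite sets, contradicting $Ult(V)$. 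The arithmetic description is then the $\Sigma^0_2$-in-parameters set $\{\vec x \mid \exists F \text{ finite}, \bigcap_{n \in F} U_n \setminus T \text{ is finite}\}$.

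The inductive cases are uniform and short once monotonicity is in hand. For $\neg\psi$ the IH gives $U \Vdash \psi$ or $U \Vdash \neg\psi$; in the first subcase Lemma \ref{monotonicity} upgrades to $U \Vdash \neg\neg\psi$, and the resulting set is the complement of the arithmetic set for $\psi$. For $\psi \wedge \chi$, $\psi \vee \chi$, and $\psi \to \chi$, a finite case analysis on the four combinations of ($U \Vdash \psi$ vs.\ $U \Vdash \neg\psi$) with ($U \Vdash \chi$ vs.\ $U \Vdash \neg\chi$) pins down which of the compound or its negation is forced; Lemma \ref{monotonicity} is exactly what lets me propagate the decision from $U$ to arbitrary $V \preceq U$, which is what the definitions of $\Vdash$ on $\vee$ and $\to$ require. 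For $\forall y \psi$, the IH that $U$ decides $\psi$ for every $y$ makes $U \Vdash \forall y \psi$ equivalent to $\forall y (U \Vdash \psi)$, and if this fails the witnessing bad $y$ propagates to every $V \preceq U$ and yields $U \Vdash \neg\forall y \psi$. The case $\exists y \psi$ is dual. In every case the set $\{\vec x \mid U \Vdash \phi\}$ is a boolean combination or a first-order quantification over the arithmetic sets provided by the IH, hence arithmetic.

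The main obstacle I expect is the refinement step in the $T \in \mathfrak{U}$ case: combining the finite $F$ witnessing finiteness of $\bigcap_{n \in F} U_n \cap T$ with an arbitrary $V \preceq U$ to produce a genuine violation of $Ult(V)$, and doing so uniformly enough to conclude $U \Vdash \neg(T \in \mathfrak{U})$ rather than merely $U \not\Vdash (T \in \mathfrak{U})$. Every other step is a mechanical unwinding of the forcing definitions using Lemma \ref{monotonicity}.
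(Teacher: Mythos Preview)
Your proposal is correct and follows exactly the approach the paper indicates: the paper merely asserts ``It is easy to see by recursion'' without supplying details, and your induction on $\phi$ is precisely that recursion, with the expected use of Lemma~\ref{monotonicity} at each step. Your treatment of the only nontrivial base case $T\in\mathfrak{U}$---combining the finite $F$ from the hypothesis with the finite $G$ witnessing $V\Vdash T\in\mathfrak{U}$ to contradict $Ult(V)$---is exactly right.
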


\begin{lemma}
\ACA{} proves that for any condition $U$ and any $X$, there is a $V\preceq U$ such that for every $i$, $U$ decides $X_i\in\mathfrak{U}$.
\label{settle_term}
\end{lemma}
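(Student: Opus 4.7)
The plan is to build $V$ by iteratively adjoining, for each $i$, either $X_i$ or $X_i^c$ to $U$ in such a way that $Ult$ is preserved; the resulting condition will then decide $X_i\in\mathfrak{U}$ by virtue of the choice made at stage $i$.

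First I would prove a dichotomy: for any condition $W$ and any set $Y$, at least one of $W\cup\{Y\}$ and $W\cup\{Y^c\}$ is again a condition. If $W\cup\{Y\}$ fails $Ult$, some finite $F$ makes $\bigcap_{n\in F}W_n\cap Y$ finite, so the infinite set $\bigcap_{n\in F}W_n$ lies in $Y^c$ outside a finite set; then for any finite $G\subseteq\text{indices}(W)$, $\bigcap_{n\in F\cup G}W_n$ is infinite (by $Ult(W)$) and almost contained in $Y^c$, and sits inside $\bigcap_{n\in G}W_n\cap Y^c$, witnessing $Ult(W\cup\{Y^c\})$.

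Next I would carry out the recursion inside \ACA{}. Define $f:\mathbb{N}\to\{0,1\}$ so that $f(i)=0$ iff adjoining $X_i$ to $U$ together with the sets already chosen at stages $j<i$ preserves $Ult$; set $Y_i:=X_i$ if $f(i)=0$ and $Y_i:=X_i^c$ otherwise, and let $V$ be the sequence interleaving the $U_n$'s with the $Y_i$'s. The dichotomy makes the rule total; any finite collection of indices from $V$ lies in some stage-$i$ approximation, which is a condition by construction, so $Ult(V)$ holds, and the $U_n$'s sitting in $V$ give $V\preceq U$. If $Y_i=X_i$ then $V\Vdash X_i\in\mathfrak{U}$ by taking $F$ to be the singleton index of $X_i$ in $V$. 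If $Y_i=X_i^c$, then for any $W\preceq V$ some $W_j\subseteq X_i^c$, and if $W\Vdash X_i\in\mathfrak{U}$ via a finite $F$ then $\bigcap_{n\in F\cup\{j\}}W_n$ would be contained in $X_i^c$ yet almost contained in $X_i$, hence finite, contradicting $Ult(W)$; so $V\Vdash\neg(X_i\in\mathfrak{U})$.

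The main obstacle will be formalizing the recursion inside \ACA{}: the predicate ``$U$ together with the earlier chosen sets and $X_i$ satisfies $Ult$'' is $\Pi^0_2$ in the parameters and the previously defined values, so $f$ is defined by a genuine arithmetic recursion of unbounded complexity. This is precisely the step requiring arithmetic comprehension, and it is what distinguishes this lemma from anything \RCA{} could prove directly.
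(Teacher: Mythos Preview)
Your proposal is correct and follows essentially the same construction as the paper: build $V$ by interleaving the $U_n$ with, for each $i$, whichever of $X_i$ or $X_i^c$ preserves $Ult$ when adjoined to the sets already chosen, using arithmetic comprehension to form the infinite choice sequence. The paper packages the recursion as a tree of finite $\{0,1\}$-sequences with a unique branch, while you phrase it as defining a function $f$ directly; and where the paper concludes with $V\Vdash X_i^c\in\mathfrak{U}$ in the negative case, you argue $V\Vdash\neg(X_i\in\mathfrak{U})$ explicitly---but these are the same argument.
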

\begin{proof}
Given a $\{0,1\}$-sequence $\sigma$ and $i<|\sigma|$, write $X_{i}(\sigma)$ for $X_{i}$ if $\sigma(i)=1$ or $X_{\vec x}^c$ if $\sigma(i)=0$.  Consider those sequences $\sigma$ such that for each $i$ with $i<|\sigma|$, $\sigma(i)=1$ iff for all finite sets $F$, $\bigcap_{n\in F}U_n\cap\bigcap_{j<i}X_{j}(\sigma)\cap X_{i}$ is infinite.  By induction $i$, we may observe that for every finite set $F$, $\bigcap_{n\in F}U_n\cap\bigcap_{j\leq i)}X_i(\sigma)$ is infinite: when $\sigma(i)=1$ this follows from the definition, and when $\sigma(i)=0$ we must have $\bigcap_{n\in F}U_n\cap\bigcap_{j<i}X_{j}(\sigma)$ infinite (by IH), and therefore since $\bigcap_{n\in F}U_n\cap\bigcap_{j<i}X_{j}(\sigma)\cap X_{i}$ is finite, $\bigcap_{n\in F}U_n\cap\bigcap_{j<i}X_{j}(\sigma)\setminus X_{i}=\bigcap_{n\in F}U_n\cap\bigcap_{j<i}X_{j}(\sigma)\cap X^c_{i}$ is infinite.  Each such sequence has a unique extension, so we may easily construct the set of such sequences, then the unique $\Lambda$ extending all such sequences, and then the set $V$ is given by $V_{2n}=U_n$, $V_{2 i+1}=X_{i}(\Lambda)$.  By construction we have $Ult(V)$ and $V\preceq U$, and clearly either $V\Vdash X_i\in\mathfrak{U}$ or $V\Vdash X_i^c\in\mathfrak{U}$ for all $i$.
\end{proof}

\begin{lemma}
Let $\phi(\vec x,\vec X)$ be an arithmetic formula with only the displayed free variables.  Then \ACA{} proves that for each $\vec X$ and any $U$, there is a $V\preceq U$ such that $V$ recursively decides $\phi$ with respect to $\vec X$.
\label{arith_rec_decide}
\end{lemma}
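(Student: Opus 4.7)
The plan is to induct externally on the structure of $\phi$, building $V$ clause-by-clause in parallel with the five clauses of the definition of recursive decision. Because several clauses of the induction require combining witnesses obtained at different stages, I would first record an auxiliary monotonicity fact: if $V$ recursively decides $\phi$ and $W \preceq V$, then so does $W$. This follows by a short induction on $\phi$; in the atomic $\mathfrak{U}$-clause, refining $V$ to $W$ replaces a witnessing finite set $F$ by some finite $G$ with $\bigcap_{m\in G} W_m \subseteq \bigcap_{n\in F} V_n$ exactly as in Lemma \ref{monotonicity}, preserving finiteness of the intersections with $T$ and with $T^c$.

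For the base cases: if $\phi$ is atomic and does not involve $\mathfrak{U}$ then $V = U$ works trivially. If $\phi$ is $T \in \mathfrak{U}$, the term $T$ may depend on $\vec x$ (via the $T_t$ convention from the introduction), so I would code all values of $T$ uniformly in $\vec x$ by introducing a set $Y$ with $Y_{\langle \vec x\rangle} = T(\vec x, \vec X)$, and then apply Lemma \ref{settle_term} to $U$ and $Y$. The resulting $V \preceq U$ either forces $Y_{\langle \vec x\rangle} \in \mathfrak{U}$ or forces $(Y_{\langle \vec x\rangle})^c \in \mathfrak{U}$ for each $\vec x$, and unpacking the forcing definition gives exactly the finite $F$ required by clause (2).

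For the inductive steps, the negation case is immediate from the IH applied to $\psi$. For a binary connective $\psi\mathbin{\circ}\chi$, apply the IH to $\psi$ with input $U$ to obtain $V_1 \preceq U$ recursively deciding $\psi$, then apply the IH to $\chi$ with input $V_1$ to obtain $V_2 \preceq V_1$ recursively deciding $\chi$; the preservation fact then gives that $V_2$ recursively decides $\psi$ as well, hence decides $\psi\mathbin{\circ}\chi$. The quantifier cases $\forall x\,\psi$ and $\exists x\,\psi$ reduce directly to the IH applied to $\psi$, treating $x$ as part of the free variable list $\vec x$; the definition of recursive decision at clause (5) demands nothing beyond what the IH already delivers.

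The main obstacle is the atomic $\mathfrak{U}$-case, which is where the $\ACA$ strength is concentrated (through the enumeration of $\{0,1\}$-sequences in the proof of Lemma \ref{settle_term}); once that lemma is in hand, the only new content here is the pairing trick that uniformizes the decision over $\vec x$. Every other step is either formal bookkeeping or an appeal to the monotonicity fact established at the outset.
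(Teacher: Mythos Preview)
Your proof is correct and follows the same inductive outline as the paper's proof, which simply dispatches the atomic $\mathfrak{U}$-case to Lemma \ref{settle_term} and says the compound cases ``follow immediately from IH.'' Your additions---the preservation of recursive decision under $\preceq$, the pairing of the $T(\vec x,\vec X)$ into a single $Y$ to invoke Lemma \ref{settle_term}, and the two-step use of IH for binary connectives---are exactly the details the paper suppresses, so this is a more careful execution of the same argument rather than a different route.
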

\begin{proof}
We show this by induction on $\phi$.  If $\phi$ is atomic and does not contain $\mathfrak{U}$ then $U$ suffices.  If $\phi$ has the form $T\in\mathfrak{U}$ then this is Lemma \ref{settle_term}.  The cases for $\wedge$, $\rightarrow$, and $\forall$ follow immediately from IH.
\end{proof}

\begin{lemma}
If $\phi$ is an axiom of \ACAU{} then \ACA{} proves $\Vdash\forall^2\vec X\forall \vec x\phi$.
\label{arith_axiom}
\end{lemma}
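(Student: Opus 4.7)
My plan is to split on the type of axiom. Logical axioms are already handled by Lemma~\ref{log_axiom}. The axioms of \ACA{} that lie in $\mathcal{L}^2$ (the basic number-theoretic axioms and the second-order induction axiom) are $\mathfrak{U}$-free, so by Theorem~\ref{reflection} their forcing is equivalent to their truth, and \ACA{} proves them directly. The real work lies in forcing the arithmetic comprehension scheme applied to formulas of $\mathcal{L}^{\mathfrak{U}}$ and the four $\exists\mathfrak{U}$-axioms.

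For a comprehension instance $\exists X\forall n(n\in X\leftrightarrow\phi(n,\vec X))$ with $\phi$ arithmetic in $\mathcal{L}^{\mathfrak{U}}$, I unfold the outer $\exists$: given $V$, I want $W\preceq V$ and a set $X$ with $W\Vdash\forall n(n\in X\leftrightarrow\phi(n))$. I apply Lemma~\ref{arith_rec_decide} (this is where \ACA{} rather than \RCA{} is crucial) to obtain $W\preceq V$ that recursively decides $\phi$; by the lemma immediately preceding it, the set $X=\{n\mid W\Vdash\phi(n)\}$ is arithmetic in the parameters and so is available in \ACA{}. To verify $W\Vdash(n\in X\leftrightarrow\phi(n))$ for each $n$, the direction $n\in X\rightarrow\phi(n)$ is routine: $n\in X$ is a $\mathfrak{U}$-free atomic formula (so forcing reduces to truth), and Lemma~\ref{monotonicity} propagates $W\Vdash\phi(n)$ down to any $W'\preceq W$. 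The converse $\phi(n)\rightarrow n\in X$ uses that $W$ decides $\phi(n)$: if $W\not\Vdash\phi(n)$ then $W\Vdash\neg\phi(n)$, and Lemma~\ref{monotonicity} then blocks any $W'\preceq W$ from forcing $\phi(n)$, making the implication hold vacuously.

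For the $\exists\mathfrak{U}$-axioms I unfold $U\Vdash T\in\mathfrak{U}$ as ``some finite $F$ makes $\bigcap_{n\in F}U_n\setminus T$ finite''. Axiom (1) (members are infinite) reduces to a $\mathfrak{U}$-free statement about $T$ handled by Theorem~\ref{reflection}, and holds because $\bigcap_{n\in F}U_n$ is infinite by $Ult(U)$; axioms (2) and (3) are bookkeeping with the finite witnesses $F$ (union of witnesses for intersection, the same witness for upward closure); and for axiom (4) (the dichotomy $X\in\mathfrak{U}\vee X^c\in\mathfrak{U}$), given any $V\preceq U$ I invoke Lemma~\ref{settle_term} to produce $W\preceq V$ that actually forces one of $X\in\mathfrak{U}$ or $X^c\in\mathfrak{U}$, which is exactly the unfolded content of forcing the disjunction. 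The main conceptual obstacle is the comprehension case: because the $\rightarrow$ clause in the forcing definition introduces a second-order quantifier, $V\Vdash\phi(n)$ for an arithmetic $\phi$ of $\mathcal{L}^{\mathfrak{U}}$ can be as complex as $\Pi^1_1$ in $n$, and \ACA{}'s comprehension cannot form the witness set from that formula directly; the role of Lemma~\ref{arith_rec_decide} is exactly to collapse this complexity to the arithmetic level on a suitable refinement $W$, which is why the theorem must be stated for \ACAU{} rather than \RCAU{}.
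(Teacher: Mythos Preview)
Your proposal is correct and follows essentially the same route as the paper: use Theorem~\ref{reflection} for the $\mathfrak{U}$-free axioms, use Lemma~\ref{arith_rec_decide} to obtain a condition that recursively decides $\phi$ and then take the arithmetic set $\{n\mid W\Vdash\phi(n)\}$ as the comprehension witness, and handle the $\exists\mathfrak{U}$-axioms by unfolding the forcing clause for $T\in\mathfrak{U}$. The only cosmetic differences are that the paper does not mention logical axioms here (they are not among the \ACAU{} axioms as defined, being treated separately in Lemma~\ref{log_axiom}), and for axiom~(4) the paper argues directly---if $U\not\Vdash X\in\mathfrak{U}$ then $U\cup\{X^c\}$ is a condition forcing $X^c\in\mathfrak{U}$---rather than citing Lemma~\ref{settle_term}, but this is the same construction.
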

\begin{proof}
The basic axioms and induction axiom never involve $\mathfrak{U}$, so are immediately forced by Theorem \ref{reflection}.

We turn to the comprehension axiom,
\[\exists Y\forall n(n\in Y\leftrightarrow\phi(n,\vec x,\vec X))\]
where $\phi$ is arithmetic.  Fix $\vec x,\vec X, U$.  By the previous lemma, choose $V\preceq U$ such that $V$ recursively decides $\phi$.  Then $Y=\{n\mid V\Vdash\phi(n,\vec x,\vec X)\}$ is arithmetic, so $V\Vdash \forall n(n\in Y\leftrightarrow\phi(n,\vec x,\vec X))$, so in particular $\exists Y(V\Vdash \forall n(n\in Y\leftrightarrow\phi(n,\vec x,\vec X)))$.

Finally, we deal with the axioms in $\exists\mathfrak{U}$.  The first three follow immediately from the definition of $U\Vdash T\in\mathfrak{U}$ and the fact that any $U$ satisfies $Ult(U)$.  For the final axiom, to show that $\Vdash X\in\mathfrak{U}\vee X^c\in\mathfrak{U}$, let $U$ be given.  If $U\Vdash X\in\mathfrak{U}$ then we are done.  Otherwise, whenever $F$ is finite, $\bigcap_{n\in F}U_n\setminus X$ is infinite.  It follows that $Ult(U\cup\{X^c\})$ holds, and therefore $U\cup\{X^c\}\preceq U$ satisfies $U\cup\{X^c\}\Vdash X^c\in\mathfrak{U}$.
\end{proof}

\begin{theorem}
\begin{enumerate}
  \item Suppose $\ACAU\vdash\phi$ where $\vec X,\vec x$ are the free variables in $\phi$.  Then \ACA{} proves that $\Vdash\forall^2 \vec X\forall x\phi$.
  \item \ACAU{} is a conservative extension of \ACA{}.
\end{enumerate}
\end{theorem}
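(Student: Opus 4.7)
The plan is to prove part (1) by an external induction on the length of an \ACAU{}-derivation of $\phi$, after which part (2) is essentially a one-line consequence of Theorem \ref{reflection}. The inductive invariant is: for every formula $\psi$ with free variables $\vec Y,\vec y$ that appears in the derivation, \ACA{} proves $\Vdash\forall^2\vec Y\forall\vec y\,\psi$.

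The axiom cases are entirely handled by the lemmas just established. Every logical axiom is covered by Lemma \ref{log_axiom}, and every non-logical axiom of \ACAU{}---the basic axioms, the induction scheme, the arithmetic comprehension scheme, and the four axioms of $\exists\mathfrak{U}$---is covered by Lemma \ref{arith_axiom}. For the inference rules, modus ponens is immediate from Lemma \ref{rule_first}: given the inductive invariants for $\phi$ and $\phi\rightarrow\psi$, I would fix arbitrary values of the free variables and an arbitrary condition $U$; by the invariants we have $U\Vdash\phi$ and $U\Vdash(\phi\rightarrow\psi)$, whence by Lemma \ref{rule_first} also $U\Vdash\psi$, and re-quantifying yields the invariant for $\psi$. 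Generalization---passing from $\psi$ to $\forall y\,\psi$ or $\forall^2 Y\,\psi$---is trivial, because $U\Vdash\forall y\,\psi$ and $U\Vdash\forall^2 Y\,\psi$ are by definition the same as $\forall y\,(U\Vdash\psi)$ and $\forall^2 Y\,(U\Vdash\psi)$.

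Part (2) then follows at once: if $\sigma$ is a sentence of $\mathcal{L}^2$ with $\ACAU\vdash\sigma$, part (1) gives $\ACA\vdash(\emptyset\Vdash\sigma)$, and since $\sigma$ does not mention $\mathfrak{U}$, Theorem \ref{reflection} converts this to $\ACA\vdash\sigma$, which is exactly the conservation statement.

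I do not anticipate a genuine obstacle: once Lemmas \ref{log_axiom}, \ref{arith_axiom}, and \ref{rule_first} are in place the induction is mostly bookkeeping. The one mild subtlety is formulating the inductive invariant as the universal closure of $\Vdash\psi$ rather than $\Vdash\psi$ for a fixed assignment; this ensures that at the modus ponens step, when the antecedent $\phi$ and the implication $\phi\rightarrow\psi$ may come with different (or additional) free variables in the derivation, one has enough freedom to instantiate the quantifier prefixes at matching values before invoking Lemma \ref{rule_first}.
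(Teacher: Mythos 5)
Your proposal is correct and follows essentially the same route as the paper: an external induction on the derivation, with logical axioms handled by Lemma \ref{log_axiom}, the axioms of \ACAU{} by Lemma \ref{arith_axiom}, modus ponens by Lemma \ref{rule_first}, and part (2) obtained by applying Theorem \ref{reflection} to $\Vdash\sigma$. Your explicit treatment of the generalization rule and of the universal closure in the inductive invariant only makes explicit what the paper leaves implicit.
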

\begin{proof}
\begin{enumerate}
  \item By induction on the proof of $\phi$.  If $\phi$ is a logical axiom or an axiom of \ACAU{}, these are covered by Lemmata \ref{log_axiom} and \ref{arith_axiom} respectively.  If $\phi$ is derived by modus ponens, this is covered by IH and Lemma \ref{rule_first}.
  \item If $\ACAU{}\Vdash\phi$ where $\phi$ is a sentence of $\mathcal{L}^2$, we have that \ACA{} proves $\Vdash\phi$ by the previous part, and therefore \ACA{} proves $\phi$ by Theorem \ref{reflection}.
\end{enumerate}
\end{proof}

We have similar results for \ATR{} and \Pioo{}.

\begin{lemma}
If $\phi$ is an axiom of \ATRU{} then \ATR{} proves $\Vdash\forall^2\vec X\forall \vec x\phi$.
\end{lemma}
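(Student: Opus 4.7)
The axioms of $\ATRU$ not already covered by Lemma \ref{arith_axiom} are the instances of the arithmetic transfinite recursion scheme. Fix such an instance: an arithmetic $\theta(n, X, \vec p)$ and a linear ordering $\prec$ on $\mathbb{N}$; the axiom asserts that if $\prec$ is a well-ordering, there exists $H$ with $H_a = \{n : \theta(n, H^{\prec a}, \vec p)\}$ for every $a \in \field(\prec)$, where $H^{\prec a}$ is the restriction of $H$ to indices $\prec$-below $a$. Well-foundedness of $\prec$ is a sentence of $\mathcal{L}^2$, so by Theorem \ref{reflection} I may reduce to the case where $\prec$ is genuinely a well-ordering in the ambient model of $\ATR$ and carry out a real transfinite recursion along it.

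Given a condition $V' \preceq U$, the plan is to produce $V'' \preceq V'$ and $H$ with $V'' \Vdash \forall a \forall n(n \in H_a \leftrightarrow \theta(n, H^{\prec a}, \vec p))$. Working by transfinite recursion on $\prec$, build simultaneously $\langle V_a \rangle$ and $\langle H_a \rangle$ such that $V_a \preceq V'$, $V_a \preceq V_b$ whenever $b \prec a$, $V_a$ recursively decides $\theta(n, H^{\prec a}, \vec p)$, and $H_a = \{n : V_a \Vdash \theta(n, H^{\prec a}, \vec p)\}$. At stage $a$, the data $\langle V_b, H_b \rangle_{b \prec a}$ determine $H^{\prec a}$ and yield a merged condition $W_a$ (described below) which refines each $V_b$ for $b \prec a$; Lemma \ref{arith_rec_decide} then produces $V_a \preceq W_a$ with the required recursive decidability, and $H_a$ is defined arithmetically from $V_a$ and $H^{\prec a}$.

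The merging is as follows: given conditions $\langle W^{(i)} \rangle_{i \in I}$ indexed by a linearly ordered set $I$ with $W^{(j)} \preceq W^{(i)}$ whenever $i$ precedes $j$, use a pairing to form a single indexed family $W$ by $W_{\langle i, n \rangle} = W^{(i)}_n$. Any finite subcollection of terms of $W$ involves only finitely many indices $i$, and that finite set has a greatest element $i^*$; by assumption $W^{(i^*)} \preceq W^{(i)}$ for every such $i$, so the chosen finite intersection is contained in a finite intersection of terms of $W^{(i^*)}$, which is infinite. Hence $W$ is a condition and $W \preceq W^{(i)}$ for every $i$. Applying this to $\langle V_a \rangle_{a \in \field(\prec)}$ after the transfinite recursion yields a single $V''$ refining every $V_a$. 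Letting $H$ code $\langle H_a \rangle$, Lemma \ref{monotonicity} together with the recursive decidability of each $V_a$ forces the desired biconditional at every $a, n$, since $V_a \Vdash \theta(n, H^{\prec a}, \vec p)$ iff $n \in H_a$, and this is transferred to $V''$, while the atomic condition $n \in H_a$ is handled by Theorem \ref{reflection}.

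The main obstacle is confirming that the joint recursion of $V_a$ and $H_a$ can be packaged as a single instance of ATR: extracting $V_a$ from $W_a$ and $H^{\prec a}$ invokes Lemma \ref{arith_rec_decide}, whose proof is constructive in its inputs, and the definition of $H_a$ from $V_a$ is straightforwardly arithmetic, so composing these into a single arithmetic operator on the initial segment of the hierarchy is bookkeeping but not deep. Once this is in place, one applies the construction inside the definition of forcing for $\exists H$: for every $V' \preceq U$ we have produced $V'' \preceq V'$ and an $H$ with $V''$ forcing the hierarchy equations, which is exactly what is needed for $\Vdash \forall^2 \vec p\, [\text{WO}(\prec) \to \exists H\, \forall a\forall n(n \in H_a \leftrightarrow \theta(n, H^{\prec a}, \vec p))]$.
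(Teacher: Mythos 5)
Your proposal is correct and follows essentially the same route as the paper: reduce to a genuine well-ordering via reflection, then by transfinite recursion along $\prec$ build a decreasing chain of conditions $V_a$ each recursively deciding $\theta$ against the previously constructed initial segment (via Lemma \ref{arith_rec_decide}), define $H_a$ arithmetically from $V_a$, merge the chain into a single condition by the pairing trick $V_{(a,n)}=(V_a)_n$, and observe the whole recursion is a single arithmetic operator so that ATR applies. Your explicit verification that the merged family is a condition, and your care in producing a refinement below an arbitrary $V'\preceq U$ to match the forcing clause for $\exists^2$, are details the paper elides but are the same argument.
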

\begin{proof}
The only axiom not covered above is the transfinite recursion axiom.  Let $U$ be given so that $U\Vdash WO(\prec_X)$; we must construct a $V\preceq U$ and a $Y$ so that for each $a\in field(\prec_X)$, $Y_a=\theta(Y^a)$.  (Recall that $Y^a=\{(b,n)\mid b\prec_X a\wedge n\in Y_b$.)  Set $V_{0}=U$ and $Y_{-1}=Y^0=\emptyset$.  Given $V_b$ for all $b\prec_X a$, observe that we may define, arithmetically (in $\prec_X$) a set $V^a$ so that $V^a\preceq V_b$ by setting $V_{(b,n)}=(V_b)_n$ for each $b\prec_X a$.  Given $V^a,Y^a$, we may define, arithmetically, a $V_a\preceq V^a$ so that $V_a$ recursively decides $\theta$ with respect to $Y^a$.  Then we may take $Y_a=\{n\mid V_a\Vdash\theta(n,Y^a)\}$, and $Y_a$ is arithmetic in $V_a,Y^a$, and so also in $V^a,Y^a$.  Therefore by trasfinite recursion along $\prec_X$, we may construct a sequence $V_a,Y_a$ so that if $b\prec_X a$ then $V_b\prec_X V_a$ and $V_a\Vdash Y_a=\theta(Y^a)$.  If we set $V_{(a,n)}=(V_a)_n$ for each $a\in field(\prec X)$, we have that $V\Vdash \forall a\in field(\prec_X) Y_a=\theta(Y^a)$.  Since $V\preceq U$, this witnesses that we have forced the transfinite recursion axiom.
\end{proof}

\begin{lemma}
Let $\phi(\vec x,\vec X)$ be a $\Sigma^1_1$ formula with only the displayed free variables.  Then \Pioo{} proves that for any values $\vec x,\vec X$ and any $U$, there is a $V\preceq U$ such that for every $\vec x$, $V$ decides $\phi(\vec x,\vec X)$ and the set of $\vec x$ such that $V\Vdash\phi(\vec x,\vec X)$ can be expressed by a formula arithmetic in $\Pi^1_1$ formulas.
\end{lemma}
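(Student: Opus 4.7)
My plan is to reduce to Lemma \ref{arith_rec_decide} via an iterative refinement construction.  Write $\phi(\vec x, \vec X) = \exists Y\, \psi(\vec x, \vec X, Y)$ with $\psi$ arithmetic in $\mathcal{L}^{\mathfrak{U}}$.  The central dichotomy, for any condition $W$ and tuple $\vec x$, is: either (a) some $Y$ and $W' \preceq W$ satisfy $W' \Vdash \psi(\vec x, \vec X, Y)$, in which case such a $W'$ already forces $\phi(\vec x)$ because monotonicity (Lemma \ref{monotonicity}) propagates $W' \Vdash \psi$ to every $W'' \preceq W'$, supplying the witness required by the forcing rule for $\exists Y$; or (b) no such pair exists, and then a direct unwinding of the forcing definition yields $W \Vdash \neg \phi(\vec x)$.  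Combining Lemma \ref{arith_rec_decide} with monotonicity, $W' \Vdash \psi$ is equivalent to the existence of a further refinement of $W'$ that recursively decides $\psi$ positively, which is an arithmetic condition; thus case (a) becomes a $\Sigma^1_1$ property of $W, \vec x, \vec X$.

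I then enumerate the tuples $\vec x_0, \vec x_1, \ldots$ and build a decreasing chain $U = V^{(0)} \succeq V^{(1)} \succeq \cdots$.  At stage $i$, using $\Sigma^1_1$-comprehension (and, for the uniform choice, $\Sigma^1_1$-AC, both consequences of \Pioo{}), I test whether case (a) holds for $\vec x_i$ relative to $V^{(i)}$; if so, I pick $V^{(i+1)} \preceq V^{(i)}$ and $Y^{(i)}$ witnessing it, giving $V^{(i+1)} \Vdash \phi(\vec x_i)$; otherwise I set $V^{(i+1)} = V^{(i)}$ and conclude $V^{(i)} \Vdash \neg \phi(\vec x_i)$.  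Define $V$ by $V_{\langle i, m \rangle} = V^{(i)}_m$: any finite intersection of $V$'s sets involves only finitely many stages, so taking the maximum stage $i^*$ and refining each participating set to a $V^{(i^*)}$-set contained in it reduces the intersection to a finite intersection inside the single condition $V^{(i^*)}$, which is infinite by $Ult(V^{(i^*)})$.  Hence $Ult(V)$ holds, $V \preceq V^{(i)}$ for every $i$ is trivial (so $V \preceq U$), and by monotonicity $V$ inherits each decision from the corresponding $V^{(i+1)}$.

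For the complexity clause, $\{\vec x : V \Vdash \phi(\vec x, \vec X)\}$ coincides with the set $P(V) := \{\vec x : \exists Y \exists W \preceq V\, W \Vdash \psi(\vec x, \vec X, Y)\}$, which is $\Sigma^1_1$ in $V, \vec X$ by the same recursive-decision reduction.  The forward inclusion is immediate by taking $W = V$ in the definition of $\Vdash \exists Y$; the reverse uses that $P(\cdot)$ is monotone decreasing along $\preceq$, so $\vec x \in P(V) \subseteq P(V^{(i)})$ rules out case (b) at the stage where $\vec x = \vec x_i$, putting us in case (a) and giving $V \Vdash \phi(\vec x)$.  Since $\Sigma^1_1$ sets are negations of $\Pi^1_1$ sets, they are arithmetic in $\Pi^1_1$.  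The main obstacle will be carrying out the $\omega$-length recursive construction rigorously inside \Pioo{}, specifically justifying via $\Sigma^1_1$-AC the existence of the entire sequence $(V^{(i)}, Y^{(i)})_i$ as a single set from which $V$ can then be read off.
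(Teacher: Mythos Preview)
Your approach is correct and genuinely different from the paper's.  Both arguments rest on the same dichotomy you isolate, but the paper packages the entire construction into a single leftmost-path extraction: it forms the tree of finite approximations to tuples $(V,Y,W)$ with $V\preceq U$ recursively deciding $\psi(\vec x,Y_{\vec x},\vec X)$ for all $\vec x$ at once and $W=\{\vec x: V\Vdash\psi(\vec x,Y_{\vec x},\vec X)\}$, orders these by $W$, and takes the leftmost infinite path.  The resulting $W$ is then $\subseteq$-maximal among all such sets, so any $\vec x\notin W$ is forced into your case~(b).  Your stage-by-stage refinement, handling one $\vec x_i$ at a time and diagonalising into a limit condition $V$, is more modular and makes the appeal to Lemma~\ref{arith_rec_decide} transparent at each step; the paper's version is a one-shot global optimisation.

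One point to tighten: $\Sigma^1_1$-AC is not the right principle for your construction.  Since $V^{(i+1)}$ depends on $V^{(i)}$, what you actually need is $\Sigma^1_1$ \emph{dependent} choice; and because the case split at each stage is itself a $\Sigma^1_1$-versus-$\Pi^1_1$ decision, the transition relation is a Boolean combination of $\Pi^1_1$ formulas rather than plainly $\Sigma^1_1$.  \Pioo{} does prove the required dependent-choice principle (indeed the standard proof goes through leftmost paths), so the gap is in the citation rather than the mathematics---but once you spell this step out carefully you will find yourself reconstructing something quite close to the paper's tree argument.  Your complexity analysis via $P(V)$ is fine and gives a $\Sigma^1_1$ description, matching what the paper obtains from the $\Pi^1_1$-definability of the leftmost path.
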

\begin{proof}
We have $\phi(\vec x,\vec X)=\exists Y\psi(\vec x,Y,\vec X)$.  Let $U,\vec X$ be given.  Consider those $V\preceq U$ and $Y$ such that for every $\vec x$, $V$ recursively decides $\psi(\vec x,Y_{\vec x},\vec X)$.  By Lemma \ref{arith_rec_decide}, the following is an arithmetic formula on $U,V,W,Y,\vec X$:
\begin{itemize}
  \item $V\preceq U$,
  \item $V$ recursively decides $\psi(\vec x,Y_{\vec x},\vec X)$,
  \item $W=\{\vec x\mid V\Vdash\psi(\vec x,Y_{\vec x},\vec X)\}$,
\end{itemize}
In particular, this is equivalent to a formula of the form $\exists V,Y,W,K\forall y\chi$ where $\chi$ contains only bounded quantifiers and $K$ is some set coding witnesses to the original statement.  There is some tuple satisfying this, namely any extension of $U$ recursively deciding $\psi(m,\emptyset)$ for all $m$, together with $Y=\emptyset$.

Consider the tree of finite tuples of sequences $(\sigma_V,\sigma_Y,\sigma_W,\sigma_K)$ approximating sets satisfying this formula; we may order such sequences by declaring $(\sigma_V,\sigma_Y,\sigma_W,\sigma_K)\prec(\tau_V,\tau_Y,\tau_W,\tau_K)$ if there is an $n$ such that $\sigma_W\upharpoonright n=\tau_W\upharpoonright n$, $\sigma_W(n)=1$ while $\sigma_W(n)=0$.  Since there is an infinite path through this tree, there is a leftmost path through this tree with respect to $\prec$.  We claim that this path gives the desired $V,Y,W$.  By construction, $V$ recursively decides $\psi(\vec x,Y_{\vec x},\vec X)$ and $W=\{\vec x\mid V\Vdash\psi(\vec x,Y_{\vec x},\vec X)\}$, so it suffices to show that for any $\vec n$, if $V\not\Vdash\forall^2 Y\psi(\vec n,Y,\vec X)$ then $V\Vdash\psi(\vec n,Y,\vec X)$.  But suppose $V\not\Vdash\forall^2 Y\psi(\vec n,Y,\vec X)$; then there is a $V'\preceq V$ and a $Y'$ such that $W\cup\{\vec n\}\subseteq \{\vec x\mid V'\Vdash\psi(\vec x,Y'_{\vec x},\vec X)\}$.  But if $\vec n\not\in W$ then this gives a path through the tree further to the left, contradicting the construction.  Therefore whenever $V\not\Vdash\psi(\vec x,Y_{\vec x})$, we may conclude $V\Vdash\forall^2 Y\psi(\vec x,Y,\vec X)$, so for every $\vec x$, $V$ decides $\exists^2 Y\psi(\vec x,Y,\vec X)$, and further, $W$, given by a formula arithmetic in $\Pi^1_1$ formulas, is precisely the set of parameters such that $V\Vdash\exists^2 Y\psi(\vec x,Y,\vec X)$.
\end{proof}

\begin{lemma}
If $\phi$ is an axiom of \PiooU{} then \Pioo{} proves $\Vdash\forall^2\vec X\forall \vec x\phi$.
\end{lemma}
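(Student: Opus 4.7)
The \ACA{} and $\exists\mathfrak{U}$ axioms of \PiooU{} are forced by Lemma \ref{arith_axiom}, so the only new axiom scheme to handle is $\Pi^1_1$-comprehension
\[\exists Y\,\forall n\,(n\in Y\leftrightarrow \phi(n,\vec x,\vec X)),\]
with $\phi$ a $\Pi^1_1$ formula of $\mathcal{L}^{\mathfrak{U}}$. Given $U,\vec x,\vec X$ and an arbitrary $V_0\preceq U$, the plan is to produce a condition $W\preceq V_0$ and a set $Y$ with $W\Vdash\forall n\,(n\in Y\leftrightarrow\phi(n))$. The main idea is to reduce to the preceding lemma by taking a negation.

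Writing $\phi$ as $\neg\chi$ with $\chi$ the corresponding $\Sigma^1_1$ formula, I will apply the preceding lemma to $\chi$ starting from $V_0$, obtaining $W\preceq V_0$ that decides $\chi(n,\vec x,\vec X)$ uniformly in $n$ and such that the set $C=\{n\mid W\Vdash\chi(n,\vec x,\vec X)\}$ is definable by a formula arithmetic in $\Pi^1_1$ formulas. I then set $Y:=\mathbb{N}\setminus C$: since $W$ decides $\chi(n)$ it also decides $\phi(n)=\neg\chi(n)$ (using the double-negation lemma to move between $W\Vdash\neg\chi(n)$ and $W\Vdash\phi(n)$), and by construction $n\in Y$ iff $W\Vdash\phi(n,\vec x,\vec X)$.

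To verify $W\Vdash\forall n\,(n\in Y\leftrightarrow\phi(n))$ I will split into cases for each $n$: either $W\Vdash\phi(n)$ and $n\in Y$, whence by Lemma \ref{monotonicity} and Theorem \ref{reflection} every $W'\preceq W$ satisfies both $W'\Vdash\phi(n)$ and $W'\Vdash n\in Y$; or $W\Vdash\neg\phi(n)$ and $n\notin Y$, whence no $W'\preceq W$ forces either $\phi(n)$ or $n\in Y$. In both cases each of the two implications making up the biconditional holds at every $W'\preceq W$, so $W$ forces the biconditional, and since this is true for every $n$, $W$ forces the universally quantified biconditional.

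The main obstacle will be justifying that the set $Y$ actually exists in \Pioo{}: ``arithmetic in $\Pi^1_1$ formulas'' is not one of the standard comprehension classes, so I must trace through the construction in the preceding lemma to confirm that its concrete output (the leftmost path through an arithmetically presented tree, together with the arithmetic decoding of $W\Vdash\chi$ once the path is fixed) lies within the comprehension strength of \Pioo{}. Once this point is settled, the rest is a routine unfolding of the forcing clauses.
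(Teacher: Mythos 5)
Your proposal is correct and takes essentially the same route as the paper: the paper observes that $\Pi^1_1$-comprehension is equivalent to $\Sigma^1_1$-comprehension and then invokes the preceding lemma directly, which is just the mirror image of your move of writing $\phi$ as $\neg\chi$, deciding the $\Sigma^1_1$ formula $\chi$, and complementing the resulting set. Your closing worry about the existence of $Y$ is not a real obstacle--sets definable by formulas arithmetic in $\Pi^1_1$ formulas are available in \Pioo{} by applying $\Pi^1_1$-comprehension to obtain the $\Pi^1_1$ components and then arithmetic comprehension relative to them--and in any case that burden is carried by the preceding lemma, not this one.
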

\begin{proof}
The only new axiom is $\Pi^1_1$-comprehension, which is equivalent to $\Sigma_1^1$-comprehension.  Given $U$, we wish to find a $V\preceq U$ and a $W$ such that
\[V\Vdash\forall n(m\in W\leftrightarrow \phi(n))\]
where $\phi$ is $\Sigma^1_1$.  The previous lemma immediately gives such sets.
\end{proof}

\begin{theorem}
\begin{enumerate}
  \item \ATRU{} is a conservative extension of \ATR{}, and
  \item \PiooU{} is a conservative extension of \Pioo.
\end{enumerate}
\end{theorem}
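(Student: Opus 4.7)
The proof will closely mirror the two-step template already used for \ACAU{}, replacing the arithmetic axiom lemma with the axiom-forcing lemmas just established for \ATRU{} and \PiooU{}. Concretely, the plan is: first, prove a proof-theoretic transfer statement of the form ``if $\ATRU\vdash\phi$ then $\ATR\vdash\Vdash\forall^2\vec X\forall\vec x\phi$'' (and analogously for \Pioo{}) by induction on the length of the proof of $\phi$; then invoke Theorem~\ref{reflection} to conclude that if $\phi$ is a sentence of $\mathcal{L}^2$ provable in the extended theory, then it is already provable in the base theory.

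For the induction on proofs, the cases are exactly those in the \ACA{} version. Logical axioms are handled uniformly by Lemma~\ref{log_axiom}, which makes no reference to which base theory we are working in. Modus ponens is handled by Lemma~\ref{rule_first}, which likewise is base-theory agnostic. The only case that needs a base-theory-specific argument is the case where $\phi$ is an axiom of the extended theory, and this is precisely the content of the two preceding lemmas: \ATR{} forces every axiom of \ATRU{} (including the transfinite recursion scheme, as just shown), and \Pioo{} forces every axiom of \PiooU{} (including $\Pi^1_1$-comprehension, via the leftmost-path construction in the preceding lemma). Since \ATRU{} extends \ACAU{} and \PiooU{} extends \ATRU{}, and since the base theories extend correspondingly, the cases for axioms already covered in the weaker extensions go through with the stronger base theory. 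Thus the inductive step closes in each of the two theories.

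For the conservativity conclusion, suppose $\phi$ is a sentence of $\mathcal{L}^2$ provable in \ATRU{} (resp.\ \PiooU{}). By the transfer statement, \ATR{} (resp.\ \Pioo{}) proves $\Vdash\phi$. Since $\phi$ contains no occurrence of $\mathfrak{U}$, Theorem~\ref{reflection} applied in the ambient theory gives $\Vdash\phi\leftrightarrow\phi$, so \ATR{} (resp.\ \Pioo{}) proves $\phi$, as required.

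I do not expect any serious obstacle here: all the real work has already been done in the preceding axiom-forcing lemmas, which is where the base-theory strength was used (arithmetic transfinite recursion to iterate the construction of $V_a,Y_a$ along $\prec_X$ for \ATR{}, and $\Pi^1_1$-comprehension for the leftmost-path argument for \Pioo{}). The one point worth writing out carefully is the bookkeeping verifying that the induction hypothesis for the transfer statement can be carried out externally to the base theory, exactly as in the parenthetical remark in the proof of Lemma~\ref{monotonicity}, so that we are really reasoning inductively about the formal proof of $\phi$ rather than inside the theory.
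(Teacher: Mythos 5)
Your proposal is correct and matches the paper's intended argument exactly: the paper states this theorem without a written proof precisely because it follows the same two-step template as the \ACAU{} case, with the two preceding axiom-forcing lemmas supplying the only new ingredients, and reflection (Theorem~\ref{reflection}) yielding conservativity for $\mathcal{L}^2$ sentences. Nothing further is needed.
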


\section{Adding Better Ultrafilters}
It is natural to ask whether we can strengthen the scheme $\exists\mathfrak{U}$ by requiring that the ultrafilter $\mathfrak{U}$ belong to one of the various classes of ultrafilters that have been studied.

The simplest family of properties we could ask for is to demand that each element of $\mathfrak{U}$ be large in some sense stronger than merely being infinite.  More precisely, we define:
\begin{definition}
A property $\mathfrak{P}\subseteq\mathcal{P}(\mathbb{N})$ is \emph{divisible} \cite{ellis69,glasner80} if:
\begin{enumerate}
  \item $\mathbb{N}\in\mathfrak{P}$,
  \item $\emptyset\not\in\mathfrak{P}$,
  \item Whenever $S\in\mathfrak{P}$ and $S\subseteq T$, $T\in\mathfrak{P}$,
  \item If $S\in\mathfrak{P}$ and $S=S_0\cup S_1$ then either $S_0\in\mathfrak{P}$ or $S_1\in\mathfrak{P}$.
\end{enumerate}
\end{definition}
For some examples of such properties, consider the sets $S$ such that:
\begin{itemize}
  \item $\sum_{n\in S}\frac{1}{n}=\infty$,
  \item $\limsup_{m-n\rightarrow\infty}\frac{|S\cap[n,m]|}{m-n}>0$ (the sets of \emph{positive upper Banach density}),
  \item Those sets such that for some $b$ and every $n$, there is an $x$ so that for each $i\in [x,x+n]$, $[i,i+b]\cap S\neq\emptyset$ (the \emph{piecewise syndetic} sets).
\end{itemize}

When, as in these three cases, $\mathfrak{P}$ is expressed by an arithmetic formula, the proof above immediately generalizes:
\begin{theorem}
If $X\in\mathfrak{P}$ is an arithmetic formula, $T$ is one of \ACA{}, \ATR{}, or \Pioo{}, and $T$ proves that $\mathfrak{P}$ is divisible and that every element of $\mathfrak{P}$ is infinite then $T+\exists\mathfrak{U}+\mathfrak{U}\subseteq\mathfrak{P}$ is a conservative extension of $T$.
\end{theorem}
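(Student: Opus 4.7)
The plan is to modify the forcing construction from the proofs above by strengthening $Ult(U)$ to $Ult_{\mathfrak{P}}(U)$, which asserts that for every finite $F$, the intersection $\bigcap_{n\in F} U_n$ belongs to $\mathfrak{P}$. Correspondingly, the atomic forcing clause should be changed to: $U\Vdash T\in\mathfrak{U}$ if and only if there exists a finite $F$ such that $\bigcap_{n\in F}U_n\setminus T\notin\mathfrak{P}$. All other clauses of the forcing relation, and the definition of $V\preceq U$, remain formally identical. Consequently, the monotonicity lemma (Lemma \ref{monotonicity}), the reflection theorem (Theorem \ref{reflection}), the forcing of logical axioms (Lemma \ref{log_axiom}), and the modus ponens lemma (Lemma \ref{rule_first})---all of which manipulate only the formal structure of the forcing---carry over verbatim. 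The overall induction-on-proofs argument that yields conservativity is then unchanged.

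The substantive verifications concentrate on re-proving the lemmas that handle the axioms of the extended theory. For the axioms of $\exists\mathfrak{U}$, the first three are forced essentially by definition, using that every element of $\mathfrak{P}$ is infinite (for axiom (1)) together with upward closure. The new axiom $\mathfrak{U}\subseteq\mathfrak{P}$ is forced using upward closure (property 3): if $V\Vdash X\in\mathfrak{U}$, then some $\bigcap_{n\in F} V_n\setminus X\notin\mathfrak{P}$, and since $\bigcap_{n\in F} V_n\in\mathfrak{P}$ decomposes as $(\bigcap V_n\cap X)\cup(\bigcap V_n\setminus X)$, divisibility forces $\bigcap V_n\cap X\in\mathfrak{P}$ and hence $X\in\mathfrak{P}$ by upward closure. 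The dichotomy axiom $X\in\mathfrak{U}\vee X^c\in\mathfrak{U}$ uses divisibility (4) directly: if $U\not\Vdash X\in\mathfrak{U}$, then $\bigcap_{n\in F}U_n\setminus X\in\mathfrak{P}$ for every finite $F$, and $U\cup\{X^c\}$ is a valid $Ult_{\mathfrak{P}}$-condition extending $U$ that forces $X^c\in\mathfrak{U}$.

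For comprehension in \ACA{}, the analogue of Lemma \ref{settle_term} goes through because divisibility applied to the decomposition
\[\bigcap_{n\in F}U_n\cap\bigcap_{j<i}X_j(\sigma) = \Bigl(\bigcap_{n\in F}U_n\cap\bigcap_{j<i}X_j(\sigma)\cap X_i\Bigr)\cup\Bigl(\bigcap_{n\in F}U_n\cap\bigcap_{j<i}X_j(\sigma)\cap X_i^c\Bigr)\]
forces at least one side to lie in $\mathfrak{P}$, mirroring the original argument for infiniteness. From there, Lemma \ref{arith_rec_decide} and the arithmetic-comprehension axiom verification go through. The transfinite recursion construction for \ATR{} and the $\Sigma^1_1$-comprehension argument for \Pioo{} invoke these lemmas only as black boxes, and so require no modification.

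The main obstacle, and the reason the hypothesis that $X\in\mathfrak{P}$ be arithmetic is indispensable, is preserving the definability calibration of the forcing relation: the modified atomic clause now incorporates ``$\cdots\notin\mathfrak{P}$'', so the forcing relation on an arithmetic formula inherits the complexity of $\mathfrak{P}$. When $\mathfrak{P}$ is arithmetic, the forcing of an arithmetic formula remains arithmetic in its parameters, which is exactly what is needed for Lemma \ref{arith_rec_decide}, the comprehension-axiom verifications, and the $\Pi^1_1$-comprehension analysis to stay within the ambient theory $T$. Any non-arithmetic $\mathfrak{P}$ would break this calibration and force us to work over a stronger base.
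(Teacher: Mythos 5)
Your proposal is correct and follows exactly the route the paper intends: the paper gives no explicit proof of this theorem, merely asserting that the earlier forcing argument generalizes once ``infinite'' is replaced by ``in $\mathfrak{P}$'' in $Ult(U)$ and in the atomic clause for $T\in\mathfrak{U}$, and your write-up supplies precisely that generalization, correctly locating where divisibility, upward closure, nontriviality, and the arithmeticity of $\mathfrak{P}$ are each needed. If anything, your account is more detailed than the paper's; the only quibble is that the closure-under-intersection axiom also quietly needs divisibility (to see that a union of two sets outside $\mathfrak{P}$ stays outside $\mathfrak{P}$), not just the definition.
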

We need that the property $X\in\mathfrak{P}$ be arithmetic in order to force the arithmetic comprehension axiom, where we have to be able to accumulate those values of the numeric parameters where certain sets belong to $\mathfrak{P}$.  Note that all three of the examples given above satisfy the assumption of this theorem.

Another important example of a divisible property is the \emph{IP sets}: a set $S$ is IP if it contains an infinite set $s_1<s_2<\cdots$ and all sums of finitely many elements of this sequence.  The divisibility of the IP sets is better known as Hindman's Theorem; it is know \cite{blass87} that Hindman's Theorem is provable in $\ACA^+$ and that Hindman's Theorem implies \ACA{} over \RCA{}, but the exact strength is a well studied problem.  The property of being IP is known to be $\Sigma^1_1$-complete, so the result above does not apply.
\begin{conjecture}
\ATRU$+$``every element of $\mathfrak{U}$ is IP'' is a conservative extension of \ATR{}.
\end{conjecture}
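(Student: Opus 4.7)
The plan is to mimic the forcing development for $\ATRU$ while replacing the notion ``infinite'' by ``IP'' throughout.  Call $U$ a \emph{Hindman condition} if $\bigcap_{n\in F}U_n$ is IP for every finite $F\subseteq\mathbb{N}$; let $V\preceq U$ mean that $V$ is a Hindman condition refining $U$ in the usual sense; and redefine $U\Vdash T\in\mathfrak{U}$ to assert that some finite $F$ satisfies $\bigcap_{n\in F}U_n\setminus T$ is not IP.  The content of Hindman's theorem --- divisibility of the IP sets --- then plays exactly the role formerly played by the trivial fact that a finite union of finite sets is finite: it lets us force the dichotomy axiom by passing from $U$ to $U\cup\{X^c\}$ whenever no $F$ witnesses $U\Vdash X\in\mathfrak{U}$ (since non-IP is hereditary, the relevant finite intersections of $U\cup\{X^c\}$ remain IP), and it guarantees that whenever $V\Vdash T\in\mathfrak{U}$ the set $T$ itself is IP, because $T$ contains $\bigcap_{n\in F}V_n\cap T$ which is IP by divisibility.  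Combined with Theorem~\ref{reflection}, this last observation forces the extra axiom ``every $X\in\mathfrak{U}$ is IP'' for free, and Lemma~\ref{monotonicity}, Theorem~\ref{reflection}, the double negation lemmas, Lemma~\ref{log_axiom}, and Lemma~\ref{rule_first} all transcribe verbatim.

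The real work is adapting Lemma~\ref{settle_term}: given a Hindman condition $U$ and a parameter $X$, one must build $V\preceq U$ that decides $X_i\in\mathfrak{U}$ for every $i$.  Directly translating the original proof, one would define a $\{0,1\}$-sequence $\sigma$ by ``$\sigma(i)=1$ iff for every finite $F$, $\bigcap_{n\in F}U_n\cap\bigcap_{j<i}X_j(\sigma)\cap X_i$ is IP''.  But ``is IP'' is $\Sigma^1_1$, so the defining clause for $\sigma(i)$ is $\Pi^1_1$ in $\sigma\upharpoonright i$, and the construction as stated appears to require $\Pi^1_1$-CA rather than \ATR{}.  This is the main obstacle.

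To stay inside $\ATR$ I would re-engineer the forcing partial order to carry combinatorial witnesses: a \emph{witnessed Hindman condition} is a pair $(U,w)$ in which $w$ assigns to each finite $F$ an explicit sub-IP-sequence whose finite-sums set is contained in $\bigcap_{n\in F}U_n$, with coherence imposed on the way $w(F')$ refines $w(F)$ when $F\subseteq F'$.  With witnesses in hand, ``is IP'' becomes arithmetic in the condition, the analogue of $\sigma$ is defined by an arithmetic recursion along $\mathbb{N}$, and the step that decides $X_i\in\mathfrak{U}$ becomes an effective application of Hindman's theorem --- thin the witnessing sub-IP-sequence until it is monochromatic for $X_i$.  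The hard part will be producing an effective version of Hindman's theorem strong enough to iterate inside $\ATR$ while preserving coherence of witnesses across the infinitely many $F$; because the exact reverse-mathematical strength of Hindman's theorem is itself famously open, this is the place where the conjecture is most fragile.  Granted such a witnessed analogue of Lemma~\ref{settle_term}, arithmetic comprehension and the transfinite recursion axiom are forced by routine modification of the $\ATRU$ arguments.
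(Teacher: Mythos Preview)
The paper does not prove this statement: it is explicitly labeled a \emph{conjecture}, and the only remark offered is that ``the proof would likely involve the use of the Iterated Hindman's Theorem, which is also provable in $\mathbf{ACA}^+$'' \cite{hirst04}.  So there is no proof to compare against; you are attempting to resolve something the author leaves open.

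That said, your diagnosis is on target and consistent with the paper's hint.  The obstruction you isolate --- that ``is IP'' is $\Sigma^1_1$, so the naive transcription of Lemma~\ref{settle_term} appears to need $\Pi^1_1$-CA rather than \ATR{} --- is exactly the point.  Your proposed fix, packaging explicit finite-sums witnesses into the condition so that the relevant predicate becomes arithmetic in the data, is morally the same idea as invoking the Iterated Hindman's Theorem: that theorem is precisely what produces a coherent nested family of sub-IP-sequences across infinitely many partitions, which is what your ``witnessed Hindman condition'' demands.  The paper's one-line hint and your longer sketch therefore converge on the same missing ingredient.

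Where your proposal remains incomplete is the step you yourself flag: you need not just Hindman's theorem but an \emph{iterated} version strong enough to thin a single IP witness through infinitely many successive colorings $X_0,X_1,\ldots$ while keeping the whole construction arithmetic in the previous stage, and you need this to go through in \ATR{}.  Hirst's Iterated Hindman's Theorem is known in $\mathbf{ACA}^+$, which sits below \ATR{}, so this is plausible; but you would have to verify that the coherence requirements you impose on $w$ across \emph{all} finite $F$ simultaneously can actually be met by that result, and that the resulting forcing still handles the transfinite recursion axiom (where the conditions must be threaded along an arbitrary well-order, not just $\omega$).  Until that verification is carried out, what you have is a credible strategy for the conjecture, not a proof --- which is exactly the status the paper assigns it.
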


The proof would likely involve the use of the Iterated Hindman's Theorem, which is also provable in $\ACA^+{}$ \cite{hirst04}.  Given the difficulty in separating these theorems from \ACA{}, it would be intersting to know whether the even stronger property that one has an entire ultrafilter of IP sets is enough to break out of \ACA{}.
\begin{question}
Does \ACAU$+$``every element of $\mathfrak{U}$ is IP'' imply $\ACA^+$?
\end{question}

A second class of family of properties are those related to the topology and algebra of the space of ultrafilters on $\mathbb{N}$ (see \cite{hindman98}).  One important example is the class of \emph{idempotent ultrafilters}:
\begin{definition}
Given $S$, let $S-n=\{m\mid m+n\in S\}$.  $\mathfrak{U}$ is \emph{idempotent} if whenever $S\in\mathfrak{U}$, $\{n\mid S-n\in\mathfrak{U}\}\in\mathfrak{U}$'.
\end{definition}
It is not hard to see that every element of an idempotent ultrafilted is an IP set, but an ultrafilter consisting only of IP sets can still fail to be idempotent.

\begin{question}
For which $T\in\{\ACA,\ATR,\Pioo\}$ is $T+$``$\mathfrak{U}$ is an idempotent ultrafilter'' conservative over $T$?
\end{question}

A third family of properties are the family of set theoretic properties an ultrafilter might have which are known to be independent of ZFC.  Two of the most important examples of such properties are:
\begin{definition}
$\mathfrak{U}$ is a \emph{$P$-point} if for every partition $\mathbb{N}=S_0\cup S_1\cup\cdots\cup S_n\cup\cdots$ such that for every $n$, $S_n\not\in\mathfrak{U}$, there is a $A\in\mathfrak{U}$ such that for every $n$, $|A\cap S_n|$ is finite.

$\mathfrak{U}$ is \emph{Ramsey} if for every partition $\mathbb{N}=S_0\cup S_1\cup\cdots\cup S_n\cup\cdots$ such that for every $n$, $S_n\not\in\mathfrak{U}$, there is a $A\in\mathfrak{U}$ such that for every $n$, $|A\cap S_n|=1$.
\end{definition}
(\cite{blass:MR2768685} contains a survey of results about these and other properties.)

The existence of such ultrafilters is independent of ZFC, but their existence easily follows from the continuum hypothesis, in both cases because it is possible (in ZFC) to extend a countable filter to satisfy a single instance of the property.  Since the forcing construction in the previous section uses precisely such a construction, we suspect the proof can be adapted to these properties.
\begin{conjecture}
For $T\in\{\ACA,\ATR,\Pioo\}$, $T+$``$\mathfrak{U}$ is a $P$-point'' and $T+$``$\mathfrak{U}$ is Ramsey'' are conservative over $T$.
\end{conjecture}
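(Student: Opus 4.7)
The plan is to adapt the forcing framework of Section~3 by showing that, under any of the three base theories $T$, the additional axiom asserting that $\mathfrak{U}$ is a $P$-point (respectively Ramsey) ultrafilter can itself be forced. Once this is done, the conservativity argument of Section~3 carries over verbatim: the new scheme slots into the induction on proofs alongside the already-verified logical axioms, comprehension axioms, and modus ponens.

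Fix a condition $U$ and a set $X$ whose slices $X_0,X_1,\ldots$ are intended to name the pieces of a partition of $\mathbb{N}$. Apply Lemma~\ref{settle_term} to extend $U$ to $V\preceq U$ deciding $X_n\in\mathfrak{U}$ for every $n$. If $V\Vdash X_n\in\mathfrak{U}$ for some $n$, then by monotonicity every $V'\preceq V$ forces $X_n\in\mathfrak{U}$ and hence fails to force the hypothesis ``no $X_n\in\mathfrak{U}$'', so the conditional axiom is vacuously forced at $V$. Otherwise we may arrange, as in the proof of Lemma~\ref{settle_term}, that $V_{2n+1}=X_n^c$ for every $n$, which yields the \emph{key observation} that $\bigcap_{m\le 2n+1}V_m\cap X_n=\emptyset$: any element lying in enough of the $V_m$'s automatically avoids all the low-index partition classes.

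For the $P$-point axiom, define $A=\{a_0<a_1<\cdots\}$ by taking $a_k$ to be the least element of $\bigcap_{m\le k}V_m$ above $a_{k-1}$. By the key observation $a_k\notin X_n$ whenever $k\ge 2n+1$, so $|A\cap X_n|$ is finite for every $n$, and the condition $W=(A,V_0,V_1,\ldots)$ extends $V$ with $W\Vdash A\in\mathfrak{U}$. For the Ramsey axiom, build $A$ in alternating stages: at stage $2j$ adjoin $\min X_{n^\ast}$, where $n^\ast$ is the least index not yet represented in $A$, ensuring that every $X_n$ is eventually hit exactly once; at stage $2j+1$ adjoin the least element of $\bigcap_{m\le M_j}V_m$ above $\max A$, where $M_j\ge j$ is chosen large enough that the key observation already forces the adjoined element into a class with index exceeding every previously hit $n$. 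The odd stages guarantee that $A\cap\bigcap_{m\le k}V_m$ is infinite for every $k$, while the even stages ensure coverage, and $W=(A,V_0,V_1,\ldots)\preceq V$ again forces $A\in\mathfrak{U}$ together with the Ramsey condition for $X$.

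All of these constructions are arithmetic in $V$ and $X$, hence executable in \ACA{} and \emph{a fortiori} in \ATR{} and \Pioo. The main obstacle will be the Ramsey case, where one must simultaneously satisfy the coverage requirement (every $X_n$ is eventually hit) and the compatibility requirement (the built set $A$ is infinite in every finite intersection of the $V_m$'s), while handling the nuisance of finite or empty partition classes without outrunning them in the alternating construction. A secondary check is that the new axiom-forcing lemma composes cleanly with the leftmost-path construction used for $\Pi^1_1$-comprehension; since that argument only depends on the arithmetic decidability provided by Lemma~\ref{arith_rec_decide}, no substantial modification is expected there.
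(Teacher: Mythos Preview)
The statement you address is labeled a \emph{Conjecture} in the paper; the authors give no proof, only the remark that since one can (in ZFC) extend a countable filter to satisfy a single instance of the $P$-point or Ramsey property, and since the forcing construction of Section~3 proceeds by exactly such extensions, they ``suspect the proof can be adapted.'' Your proposal is thus not competing against a paper proof but rather carrying out the adaptation the authors gesture at, and it does so along the intended lines.

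The $P$-point construction is essentially correct, though two points deserve tightening. First, the logical scaffolding is slightly off: to force the implication you should start from an arbitrary condition $V$ that already forces the hypothesis $\forall n\,\neg(X_n\in\mathfrak{U})$ and an arbitrary $W\preceq V$, rather than first extending via Lemma~\ref{settle_term}; the case split on whether some $X_n\in\mathfrak{U}$ is forced is then superfluous. Second, you should explicitly verify that after interleaving the $X_n^c$ into the condition one still has $Ult$: this uses the partition hypothesis, since $\bigcap_{n\in G}X_n^c$ is the complement of the finite union $\bigcup_{n\in G}X_n$, and if some $\bigcap_{m\in F}W_m$ were almost contained in such a union then one of the $X_n$ could be forced into $\mathfrak{U}$ below $W$, contradicting the hypothesis. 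For the Ramsey case your alternating construction works when every $X_n$ is nonempty, but with the paper's definition requiring $|A\cap X_n|=1$ the axiom is literally unsatisfiable when some class is empty; this is a defect of the formulation rather than of your argument, and the standard fix (replace $=1$ by $\le 1$, or restrict to partitions into nonempty pieces) is routine. With these adjustments both constructions are arithmetic in the data and go through in \ACA{}, as you claim.
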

It would be particularly interesting if these properties---which are independent of ZFC---are nonetheless conservative, while properties like being idempotent turn out not to be conservative over these theories despite being provable from ZFC.

\bibliographystyle{plain}
\bibliography{../Bibliographies/main}

\begin{thebibliography}{10}

\bibitem{avigad98}
Jeremy Avigad.
\newblock An effective proof that open sets are {R}amsey.
\newblock {\em Arch. Math. Logic}, 37(4):235--240, 1998.

\bibitem{blass:MR2768685}
Andreas Blass.
\newblock Combinatorial cardinal characteristics of the continuum.
\newblock In {\em Handbook of set theory. {V}ols. 1, 2, 3}, pages 395--489.
  Springer, Dordrecht, 2010.

\bibitem{blass87}
Andreas~R. Blass, Jeffry~L. Hirst, and Stephen~G. Simpson.
\newblock Logical analysis of some theorems of combinatorics and topological
  dynamics.
\newblock In {\em Logic and combinatorics ({A}rcata, {C}alif., 1985)},
  volume~65 of {\em Contemp. Math.}, pages 125--156. Amer. Math. Soc.,
  Providence, RI, 1987.

\bibitem{ellis69}
Robert Ellis.
\newblock {\em Lectures on topological dynamics}.
\newblock W. A. Benjamin, Inc., New York, 1969.

\bibitem{glasner80}
S.~Glasner.
\newblock Divisible properties and the {S}tone-\v {C}ech compactification.
\newblock {\em Canad. J. Math.}, 32(4):993--1007, 1980.

\bibitem{Hindman2005}
Neil Hindman.
\newblock Algebra in the {S}tone-\v {C}ech compactification and its
  applications to {R}amsey theory.
\newblock {\em Sci. Math. Jpn.}, 62(2):321--329, 2005.

\bibitem{hindman98}
Neil Hindman and Dona Strauss.
\newblock {\em Algebra in the {S}tone-\v {C}ech compactification}, volume~27 of
  {\em de Gruyter Expositions in Mathematics}.
\newblock Walter de Gruyter \& Co., Berlin, 1998.
\newblock Theory and applications.

\bibitem{hirst04}
Jeffry~L. Hirst.
\newblock Hindman's theorem, ultrafilters, and reverse mathematics.
\newblock {\em J. Symbolic Logic}, 69(1):65--72, 2004.

\bibitem{simpson99}
Stephen~G. Simpson.
\newblock {\em Subsystems of second order arithmetic}.
\newblock Perspectives in Mathematical Logic. Springer-Verlag, Berlin, 1999.

\bibitem{towsner:MR2795548}
Henry Towsner.
\newblock A combinatorial proof of the dense {H}indman's theorem.
\newblock {\em Discrete Math.}, 311(14):1380--1384, 2011.

\bibitem{towsner:MR2791353}
Henry Towsner.
\newblock Hindman's theorem: an ultrafilter argument in second order
  arithmetic.
\newblock {\em J. Symbolic Logic}, 76(1):353--360, 2011.

\end{thebibliography}

\end{document}